\def\@biblabel#1{}
\def\@biblabel#1{}
\newtheorem{theorem}{Theorem}[]
\newtheorem{remark}{Remark}[]
\newtheorem{lemma}{Lemma}[]
\begin{document}
	
	\begin{center}
		{\Large\bf  Estimation after selection from bivariate normal  population using LINEX	loss function } \\
	\vspace{0.3in}
{\bf Mohd. Arshad$^{a,}$\footnote{Corresponding author. E-mail addresses: ~arshad.iitk@gmail.com (M. Arshad), \\   ~abdalghani.amu@gmail.com
		(O. Abdalghani), kaluram.iitkgp@gmail.com
		(K.R. Meena).}  Omer Abdalghani$^{a}$ and \bf K.R. Meena $^{b}$}
\\
$^{a}$Department of Statistics and Operations Research, 
Aligarh Muslim University, Aligarh, India. \\
$^{b}$Department of Mathematics, Acharya Narendra Dev College,  Delhi, India.
	\end{center}
	
	\noindent------------------------------------------------------------------------------------------------------------
	\begin{abstract}
Let $\pi_1$ and $\pi_2$ be two independent populations, where the population $\pi_i$ follows a bivariate normal distribution with unknown mean vector $\boldsymbol{\theta}^{(i)}$ and  common known  variance-covariance matrix $\Sigma$, $i=1,2$. The present paper is focused on estimating a characteristic $\theta_{\textnormal{y}}^S$ of the selected bivariate normal population, using a  LINEX loss function. A natural selection rule is used for achieving the aim of selecting the best bivariate normal population. Some natural-type estimators and Bayes estimator (using a conjugate prior) of $\theta_{\textnormal{y}}^S$ are presented. An admissible subclass of equivariant estimators, using the LINEX loss function, is obtained. Further, a sufficient condition for improving the competing estimators of $\theta_{\textnormal{y}}^S$ is derived. Using this sufficient condition, several estimators improving upon the proposed natural estimators are obtained. Further, a real data example is provided for  illustration purpose. Finally, a comparative study on the competing estimators of $\theta_{\text{y}}^S$ is carried-out using simulation.

 	\end{abstract}
\vskip 2mm
	 \section{Introduction.}  
   The  estimation  of a characteristic after selection has been recognized as an important practical problem for many years. The problem  arises naturally in multiple applications where one wishes to select a population from the   available $k\, (\geq 2)$ populations and then estimate some characteristics (or parametric functions) associated with the population selected by a fixed selection rule. For example, in modelling economic phenomenons, often the economist is faced with the problem of choosing an economic model from $k\, (\geq 2)$  different models that returns a minimum loss to the capital economic. After the selection of the desired  economic model, using a pre-specified selection procedure,  the economist would like to have an estimate of the return losses from the selected model. In clinical research, after the selection of  the most effective treatment from a choice of $k$ available treatments, a doctor may wishes to have an estimate of the effectiveness of the selected treatment. The aforementioned problems are continuation of the general formulation of the Ranking and Selection problems. Several inferential methods for statistical selection and estimation related to these problems   have been developed by many authors, see
   Cohen and Sackrowitz (1982),    Misra and Dhariyal (1994), Misra and  van der Meulen (2001), Vellaisamy and Punnen (2002), Stallard et al. (2008), Vellaisamy (2009),   Misra and Arshad (2014), Arshad et al. (2015), Arshad and	Misra (2015a, 2015b), Fuentes et al. (2018),  Meena et al. (2018),  Arshad and Abdalghani (2019).

 The majority of prior studies on  selection and estimation following selection problems have exclusively focused on a selected univariate population, and very few papers have appeared for a selected  bivariate/multivariate population.
Some of the works devoted to the bivariate/multivariate case are due to
  Amini and Nematollahi (2016) and Mohammadi and Towhidi (2017).  
In particular,   Mohammadi and Towhidi (2017)  considered the estimation of a  characteristic  after selection from bivariate normal population, using a  squared error loss function. The authors used this loss function and derived a Bayes estimator  of a characteristic of the bivariate normal population selected by a natural selection rule. The authors also provided some admissibility and inadmissibility results. This paper continues the study of  Mohammadi and Towhidi (2017) by considering the following loss function 
\begin{equation} \label{loss1.1}
L (\delta , \theta) = e^{a \left(\delta - \theta \right) } - a (\delta - \theta) - 1 . ~ \ \ \delta \in \mathbb{D}, \ \theta \in \Theta,
\end{equation}  
where $\delta$ is an estimator of the unknown parameter $\theta$,  $a$ is a location parameter of the  loss function (\ref{loss1.1}),  $\Theta$ denotes the parametric space, and  $\mathbb{D}$ represents a class of estimators of $\theta$. The  loss function in Equation (\ref{loss1.1}) is generally called an asymmetric  linear exponential (LINEX) loss and is useful in situations where positive bias (overestimation) is assumed to  be more preferable than negative bias (underestimation) or vice versa. Many researchers have used the above loss function, see among others Zellner (1986), Lu et al. (2013)  Nematollahi and Jozani (2016), and  Arshad and Abdalghani (in press).

The normal distribution is the most important and  used probability model in  many natural phenomena. For instance,  variables such as psychological, educational, blood pressure, and heights, etc.,  follow normal distribution.
  One  generalization of the univariate normal distribution is the bivariate normal distribution.  
Consider two independent populations $\pi_1 $ and $\pi_2$. 
Let  $\boldsymbol{Z}_i =(X_i, Y_i)^\intercal$ be a random vector associated with the bivariate normal population $\pi_i \equiv N (\boldsymbol{\theta}^{(i)}, \boldsymbol{\Sigma})$, where $\boldsymbol{\theta^{(i)}}= \left(\theta_x^{(i)}, \theta_y^{(i)} \right)^\intercal $ denotes the 2-dimensional  unknown mean vector $(i=1,2)$, and $\boldsymbol{\Sigma} = \begin{bmatrix}
\sigma_{xx} & \sigma_{xy}  \\
\sigma_{xy} & \sigma_{yy}
\end{bmatrix}$ denotes the  common known positive-definite  variance-covariance matrix.
Suppose that the  $Y$-variate  is  a characteristic  which is difficult (or expensive) to measure whose mean is of interest, and  the  $X$-variate is an auxiliary  characteristic  which is easy (or inexpensive) to measure. Then, based on an available information of the $X$-variate, we wish to make some inferences about the corresponding $Y$-variate. For instance, $X$ may be the grade of an applicant  on a particular test and $Y$ is  regarded as  a grade  on a future test. Then, based on the $X$-grade we want to see   the  behavior of the corresponding Y-grade. 
  Let $X_{(1)}$ and $X_{(2)}$ be the order statistics from $X_1$ and $X_2$.
Then, the $Y$-variates induced by the order statistic $X_{(i)}$ is called the concomitant of $X_{(i)}$ and is denoted by $Y_{[i]}$ ($i=1,2$). Assume  that the bivariate population associated with $\max\{\theta_{x}^{(1)}, \theta_{x}^{(2)}\}$ is referred as the better population. For selecting the better population, a  natural selection rule $\boldsymbol{\psi}=(\psi_{1}, \psi_{2})$ selects the population associated with  $X_{(2)}=\max(X_1,X_2)$, so that, the natural selection rule $\boldsymbol{\psi}=(\psi_{1}, \psi_{2}) $ can be expressed as 
\begin{equation} \label{sel-rule}
\psi_{1}(\boldsymbol{x})=\left \{ \begin{array}{ll} 1,
& \mbox{if} \  \ X_1 > X_2 \\
0, & \mbox{if} \ \ X_1 \leq X_2, \end{array} \right.
\end{equation}
and $\psi_{2}(\boldsymbol{x}) = 1-\psi_{1}(\boldsymbol{x})$.  After a bivariate normal  population is selected using the selection rule $\boldsymbol{\psi}$, given in (\ref{sel-rule}), 
we are interested in the estimation of  the second component of the mean vector associated with the selected population, which can be expressed as
\begin{align*} \label{parameter}
\theta_{\text{y}}^S (\boldsymbol{x})&=  \theta_{y}^{(1)} \psi_{1}(\boldsymbol{x}) + \theta_{y}^{(2)} \psi_{2}(\boldsymbol{x}) \vspace{2mm}
\\
 & = \left \{ \begin{array}{ll} \theta_{y}^{(1)},
& \mbox{if} \  \ X_1 > X_2 \vspace{2mm} \\
\theta_{y}^{(2)}, & \mbox{if} \ \ X_1 \leq X_2. \end{array} \right.
\end{align*}
 Note that $\theta_{\text{y}}^S$  depends on the variable $X_i, \, i=1,2$, so that is  a random parameter. 
Our goal is to  estimate $\theta_{\text{y}}^S$ using  the loss function given in  (\ref{loss1.1}).  
 
  Putter and Rubinstein (1968) have shown that  an unbiased estimator    of the mean after selection from univariate normal population does not exist. 
 Dahiya (1974) continued the study of Putter and Rubinstein (1968) by  proposing several different estimators of mean and  investigated their corresponding bias and mean squared error.  Later, Parsian and Farsipour (1999)   considered   two univariate normal populations having same known variance but  unknown means, using the loss function given in (\ref{loss1.1}). They suggested seven different estimators for the mean and investigated their respective biases and risk functions. Misra and van der Muelen (2003) continued the study of Parsian and Farsipour (1999) by deriving some admissibility and inadmissibility results for estimators of the mean of the univariate normal  population selected by a natural selection rule. As a consequence, they obtained some estimators better than those suggested by Parsian and Farsipour (1999). Recently, Mohammadi and Towhidi (2017) extended  the study of  Dahiya (1974) by considering a bivariate normal population. The authors  derived Bayes and minimax estimators  and an admissible subclass of natural  estimators  were also obtained. Further, they  provided some improved estimators of  the mean of the selected bivariate normal population. 
  This article continues the investigation
   of  Mohammadi and Towhidi (2017) by deriving various competing  estimators and decision theoretic results  under the LINEX loss function. 
  
Note that,  using the loss function given in (\ref{loss1.1})  for  estimating  $\theta_{\text{y}}^S$, the estimation problem under consideration  is location invariant with regard to a  group of permutation and a location  group of  transformations. 
Moreover, its appropriate to use permutation and location invariante estimators satisfying  $\delta \left( \boldsymbol{Z}_1, \boldsymbol{Z}_2 \right)= \delta \left( \boldsymbol{Z}_2, \boldsymbol{Z}_1 \right) $ and $\delta\left( \boldsymbol{Z}_1 + \boldsymbol{c}, \boldsymbol{Z}_1+ \boldsymbol{c}\right)  =\delta\left( \boldsymbol{Z}_1 ,\boldsymbol{Z}_1  \right)+ c_2, \  \forall \ \boldsymbol{c}= \left( c_1,c_2 \right)^{\intercal}  \in \mathbb{R}^2$, 
where $\mathbb{R}^2$  denotes the 2-dimensional Euclidean space. Therefore, any location equivariant estimator of $\theta_{\text{y}}^S$ will be of the form 
\begin{equation}\label{equi-c}
\delta_\varphi  \left(  \boldsymbol{Z}_1, \boldsymbol{Z}_2 \right)= Y_{[2]} + \varphi \left( X_{(1)} - X_{(2)},  Y_{[1]}- Y_{[2]} \right),  
\end{equation} 
where $ \varphi(\cdot) $ is a function of $X_{(1)} - X_{(2)}$ and $Y_{[1]}- Y_{[2]}$. Let $\mathcal{Q}_c$ represents the class of all equivariant estiamtors of the form (\ref{equi-c}). 
  For notational simplicity, the following notations will be adapted throughout the paper; $\boldsymbol{Z}=(\boldsymbol{Z}_1,\boldsymbol{Z}_2)$,  
  $ \theta_x = \max \left(  \theta_x^{(1)}, \theta_x^{(2)} \right) -  \min \left( \theta_x^{(1)}, \theta_x^{(2)} \right) $, $\theta_y= \max \left(  \theta_y^{(1)}, \theta_y^{(2)} \right) -  \min \left( \theta_y^{(1)}, \theta_y^{(2)} \right)$,  $\boldsymbol{\theta}^*=\left(\theta_{x},  \theta_{y} \right)^\intercal \in  \mathbb{R}_+^2$, where $\mathbb{R}_+^2$  denotes  the positive part of the two dimensional
  Euclidean space $\mathbb{R}^2$, and  $\phi(\cdot) $ and $ \Phi(\cdot)$ denote the usual pdf and cdf of $N(0,1)$. 

 We presented  some natural estimators and  Bayes estimator, under the loss function (\ref{loss1.1}),   of $\theta_{\text{y}}^S$ in Section 2.  In Section 3, an admissible subclass of natural type estimator is obtained. 
  Further,  a result of improved estimators  is derived in Section 4. 
 In Section 5, a data analysis using a real data set is provided to illustrate the computation of the various estimates of $\theta_{\text{y}}^S$. Finally, in Section 6, using the LINEX loss function, risk comparison of the estimators of $\theta_{\text{y}}^S$  is carried-out using  a simulation study. 

\section{Estimators of $\theta_{\text{y}}^S$ }
In this section, we present various estimators of $\theta_{\text{y}}^{S}$ of the selected population. First, based on the maximum likelihood estimator (MLE), an estimator of $\theta_{\text{y}}^{S}$ is given by 
\begin{align*}
\delta_{N,1} (\boldsymbol{Z}) 
= Y_{[2]}.
\end{align*}
Similarly, based on the minimum risk equivariant estimator (MREE), an estimator of $\theta_{\text{y}}^{S}$ is given by  
\begin{align*}
\delta_{N,2} (\boldsymbol{Z}) 
 = Y_{[2]}- \frac{1}{2} a \sigma_{yy}. 
\end{align*} 
The third estimator of $\theta_{\text{y}}^{S}$ that we propose is given by 
\begin{align*}
\delta_{N,3}  \left(  \boldsymbol{Z}\right)  = Y_{[2]} + \frac{1}{a} \ln \left[ 1 + \left( e^{ a \left( Y_{[1]}- Y_{[2]}\right) }  -1\right)  \Phi \left( \frac{X_{(1)}- X_{(2)}}{\sqrt{2\sigma_{xx}}} \right)  \right].
\end{align*}
Note that the estimator $\delta_{N,3} $ is based on the MLE of  $ \frac{1}{a}  \ln \left[ E\left(  e^{a\theta_{\text{y}}^{S}} \right)  \right]$, where $ E\left(  e^{a\theta_{\text{y}}^{S}} \right) =  e^{a\theta_y^{(2)}} \left[ 1 + \left( e^{a\left( \theta_y^{(1)}- \theta_y^{(2)} \right) } - 1  \right) \Phi \left( \frac{\theta_x^{(1)} - \theta_x^{(2)}}{\sqrt{2\sigma_{xx}}} \right) \right]$. 

\noindent Another natural estimator of $\theta_{\text{y}}^{S}$, which is similar to the estiamtor studied by Dahiya (1974),  is given by 
\begin{align*}
\delta_{N,4} \left(  \boldsymbol{Z} \right) = \left\{ \begin{array}{ll}
\frac{Y_{[1]}+Y_{[2]}}{2}, &\mbox{if}  \ \   X_{(1)} - X_{(2)}> - c\sqrt{2\sigma_{xx}}  \vspace{3mm} \\ 
Y_{[2]},  &\mbox{if} \ \   X_{(1)} - X_{(2)} \leq  - c\sqrt{2\sigma_{xx}},  
\end{array} \right. 
\end{align*}
where $c>0$ is a constant. The estimator $\delta_{N,4}$ is called  hybrid estimator and   is same as the estimator $\delta_{N,1}$ for $c=0$.
\begin{remark}
It can be verified that,  the estimator $\delta_{N,2}$ is also a generalized Bayes estimator of $\theta_{\textnormal{y}}^{S}$, using the loss function given in (\ref{loss1.1}) and  the  improper prior  
$\Pi \left(  \boldsymbol{\boldsymbol{\theta}^{(1)}, \boldsymbol{\theta}^{(2)}} \right)=1,\ \forall \ \boldsymbol{\theta}^{(i)}\in \mathbb{R}^2,\ i=1,2.$
\end{remark}
 
\begin{theorem}
	Under  the conjugate prior $ N_2 ( \boldsymbol{\mu}, \boldsymbol{\vartheta})$ and  the  loss function given in (\ref{loss1.1}), the Bayes estimator of $\theta_{\textnormal{y}}^{S}$ is given by 
	\begin{align*}\label{Bayes-est}
	\delta_{B} \left(  \boldsymbol{Z}\right)  &=    \frac{\mu_2 ( |\Sigma|+ m \sigma_{yy}) + m Y_{[2]} (m+\sigma_{xx}) +   m \sigma_{xy} (\mu_1 -X_{(2)})}  {m^2 + m \sigma_{xx} + m \sigma_{yy}+ |\Sigma| } \nonumber  \\
&  \hspace*{1.5cm} - \frac{a}{2}  \frac{m^2 \sigma_{yy} + m |\Sigma|}{  \left(  m^2 + m \sigma_{xx}+ m \sigma_{yy} +  |\Sigma| \right)}. 
	\end{align*}
\end{theorem}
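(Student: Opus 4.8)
The plan is to begin from the general form of the Bayes rule under the LINEX loss (\ref{loss1.1}). Writing the posterior expected loss as $E[e^{a(\delta-\theta_{\text{y}}^S)}-a(\delta-\theta_{\text{y}}^S)-1\mid\boldsymbol{Z}]=e^{a\delta}E[e^{-a\theta_{\text{y}}^S}\mid\boldsymbol{Z}]-a\delta+aE[\theta_{\text{y}}^S\mid\boldsymbol{Z}]-1$, differentiating in $\delta$ and setting the derivative to zero gives the closed form
\begin{equation*}
\delta_{B}(\boldsymbol{Z})=-\frac{1}{a}\ln E\!\left[e^{-a\theta_{\text{y}}^S}\mid\boldsymbol{Z}\right].
\end{equation*}
Thus the entire task reduces to computing the posterior Laplace transform of the random parameter $\theta_{\text{y}}^S$.

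Next I would use the structure of the model to identify that posterior law. Since $\pi_1$ and $\pi_2$ are independent and the prior is placed independently on $\boldsymbol{\theta}^{(1)}$ and $\boldsymbol{\theta}^{(2)}$, the joint posterior factorizes across the two populations. Conditional on $\boldsymbol{Z}$ the selection indicators $\psi_1,\psi_2$ are fixed, so $\theta_{\text{y}}^S$ equals the $\theta_y$-coordinate of whichever population carries $X_{(2)}$; consequently the posterior of $\theta_{\text{y}}^S$ is exactly the marginal posterior of $\theta_y$ for the selected population, based on its own observation $(X_{(2)},Y_{[2]})^\intercal$. This is the step that makes the selection effectively disappear from the posterior calculation.

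The core is then a standard normal--normal conjugate update. With prior $N_2(\boldsymbol{\mu},\boldsymbol{\vartheta})$, where $\boldsymbol{\vartheta}=m\,I_2$, and likelihood $N_2(\boldsymbol{\theta},\Sigma)$, the posterior of the selected mean vector is normal with covariance $\Sigma_{\text{post}}=(\boldsymbol{\vartheta}^{-1}+\Sigma^{-1})^{-1}$ and mean $\Sigma_{\text{post}}(\boldsymbol{\vartheta}^{-1}\boldsymbol{\mu}+\Sigma^{-1}(X_{(2)},Y_{[2]})^\intercal)$. I would invert the two $2\times2$ matrices explicitly, read off the posterior mean $\mu_{\text{post},y}$ and variance $\sigma^2_{\text{post},y}$ of the second coordinate, and apply the normal moment generating function $E[e^{-a\theta_{\text{y}}^S}\mid\boldsymbol{Z}]=\exp(-a\mu_{\text{post},y}+\tfrac{a^2}{2}\sigma^2_{\text{post},y})$. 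Substituting into the displayed formula collapses the logarithm to
\begin{equation*}
\delta_{B}(\boldsymbol{Z})=\mu_{\text{post},y}-\frac{a}{2}\,\sigma^2_{\text{post},y},
\end{equation*}
which already exhibits the two-term structure of the claimed estimator.

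The only genuine obstacle is the algebraic bookkeeping in these inversions. The determinant of $\Sigma_{\text{post}}^{-1}$ supplies the common denominator, and the key simplification is to invoke $|\Sigma|=\sigma_{xx}\sigma_{yy}-\sigma_{xy}^2$ repeatedly: for instance $(|\Sigma|+m\sigma_{yy})(|\Sigma|+m\sigma_{xx})-m^2\sigma_{xy}^2=|\Sigma|\,(m^{2}+m\sigma_{xx}+m\sigma_{yy}+|\Sigma|)$, producing the denominator $D=m^{2}+m\sigma_{xx}+m\sigma_{yy}+|\Sigma|$. The same identity forces the $m^2$ cross-terms to cancel in the coefficients of $X_{(2)}$ and $Y_{[2]}$, leaving the posterior mean as $\tfrac{1}{D}[\mu_2(|\Sigma|+m\sigma_{yy})+mY_{[2]}(m+\sigma_{xx})+m\sigma_{xy}(\mu_1-X_{(2)})]$ and the posterior variance as $\tfrac{m^2\sigma_{yy}+m|\Sigma|}{D}$. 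Matching these two expressions against the displayed $\delta_B$ completes the proof; the place where sign errors are most likely to intrude is the off-diagonal $-\sigma_{xy}$ entry of $\Sigma^{-1}$, so I would track those terms with particular care.
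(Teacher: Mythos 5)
Your proposal is correct and follows the same computational route as the paper: the LINEX Bayes rule $\delta_B=-\tfrac{1}{a}\ln E[e^{-a\theta_{\text{y}}^S}\mid\boldsymbol{Z}]$, the normal--normal conjugate update with $\boldsymbol{K}=(\boldsymbol{\Sigma}^{-1}+\boldsymbol{\vartheta}^{-1})^{-1}$, and the normal moment generating function yielding $\mu_{\text{post},y}-\tfrac{a}{2}\sigma^2_{\text{post},y}$; your stated posterior mean, posterior variance, and common denominator $D=m^2+m\sigma_{xx}+m\sigma_{yy}+|\Sigma|$ all match the paper's $p_i^*$ and $q_i^*$. The one place you diverge is the reduction from $\theta_{\text{y}}^S$ to the selected component: you argue directly that, given $\boldsymbol{Z}=\boldsymbol{z}$, the selection is deterministic and independence makes the posterior of $\theta_{\text{y}}^S$ coincide with the posterior of $\theta_y^{(i^*)}$ given $\boldsymbol{z}_{i^*}$ alone, whereas the paper verifies that the component-wise posterior risk is constant in $\boldsymbol{Z}_i$ and then invokes Theorem 3.1 and Lemma 3.2 of Sackrowitz and Samuel-Cahn (1987). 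Your direct argument is valid and more self-contained for the pointwise Bayes estimator; the paper's route additionally delivers the Bayes risk as a byproduct of the constant posterior risk.
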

\begin{proof}
Suppose that $\boldsymbol{\theta}^{(i)}$   has a conjugate  bivariate normal prior $ N_2( \boldsymbol{\mu}, \boldsymbol{\vartheta})$
where $ \boldsymbol{\mu}= \left(\mu_1, \mu_2 \right)^\prime$,  $\boldsymbol{\vartheta}=mI$, and $I$ denotes an identity matrix of order 2 and $m$ is a positive real number.  Then,  the posterior distribution of $\boldsymbol{\theta}^{(i)}$, given $\boldsymbol{Z}_i=\boldsymbol{z}_i$, is 
\begin{equation}\label{posterior}
\Pi^* \left(  \boldsymbol{\theta}^{(i)} \big| \boldsymbol{z}_i \right)  \sim  N_2  \left(  \boldsymbol{K} \left(  \boldsymbol{\Sigma}^{-1} \boldsymbol{z}_i +   \boldsymbol{\vartheta}^{-1} \boldsymbol{\mu} \right) , \boldsymbol{K}\right), \ \ \ i=1,2, 
\end{equation}
where $\boldsymbol{K} = \left(\boldsymbol{\Sigma}^{-1} + \boldsymbol{\vartheta}^{-1} \right)^{-1}$.
\\
The posterior risk of an estimator $\delta_i$  of $\theta_y^{(i)}$   under the loss function (\ref{loss1.1}) is  
\begin{equation}\label{post-exp}
E_{\Pi^*} L \left( \delta_i (\boldsymbol{Z}_i\right) , \theta_y^{(i)}) = e^{a  \delta_i \left( \boldsymbol{Z}_i\right)   } E_{\Pi^*}  \left[ e^{-a \theta_y^{(i)} } \Big| \boldsymbol{Z}_i=\boldsymbol{z}_i \right] - a \left( \delta_i \left( \boldsymbol{Z}_i \right)  - E_{\Pi^*} \left( \theta_y^{(i)} \big | \boldsymbol{Z}_i=\boldsymbol{z}_i \right) \right)  - 1, \end{equation} 
 $i=1,2$. It is not difficult to check that the Bayes estimator  $\delta_i^B (\boldsymbol{Z}_i)$ of $\theta_y^{(i)}$, which minimizes the posterior risk  (\ref{post-exp}), is given by  
\begin{equation}\label{post-minz}
  \delta_i^B (\boldsymbol{Z}_i) = -\frac{1}{a} \ln \left[  E_{\Pi^*}  \left[ e^{-a \theta_y^{(i)} } \Big| \boldsymbol{Z}_i=\boldsymbol{z}_i \right]   \right] = - \frac{1}{a}  \ln \left[   M_{\theta_y^{(i)}  \big | \boldsymbol{z}_i}  (-a ) \right],  \ \  i=1,2, 
\end{equation}
where $M_{\theta_y^{(i)} \big | \boldsymbol{z}_i} (\cdot)$ denotes the moment generating function (MGF) of $\theta_y^{(i)} \big | \boldsymbol{z}_i$. It follows from (\ref{posterior}) that   $\theta_y^{(i)} \big | \boldsymbol{z}_i $ has univariate normal distribution $N (p_i^*, q_i^*)$, where  
\begin{equation*}
p_i^*=   \frac{\mu_2 ( |\Sigma|+ m \sigma_{yy}) + m Y_i (m+\sigma_{xx}) +   m \sigma_{xy} (\mu_1 -X_i)}  {m^2 + m \sigma_{xx} + m \sigma_{yy}+ |\Sigma| },  
\end{equation*}
and 
\begin{equation*}
q_i^*=  \frac{m^2 \sigma_{yy} + m |\Sigma|}{  \left(  m^2 + m \sigma_{xx}+ m \sigma_{yy} +  |\Sigma| \right) }, \  \ \ i=1,2.
\end{equation*}
Therefore, 
\begin{equation}\label{mgf-1}
M _{\boldsymbol{\theta}^{(i)}  \big | \boldsymbol{z}_i}  (-a )  = e^{-a p_i^* + \frac{1 }{2} a^2  q_i^*}, \  \ \ i=1,2. 
\end{equation} 
Combining (\ref{post-minz}) and (\ref{mgf-1}), we get 
\begin{align*} 
\delta_i^{B} (\boldsymbol{Z}_i) & =    \frac{\mu_2 ( |\Sigma|+ m \sigma_{yy}) + m Y_i (m+\sigma_{xx}) +   m \sigma_{xy} (\mu_1 -X_i)}  {m^2 + m \sigma_{xx} + m \sigma_{yy}+ |\Sigma| }  \\
& \hspace*{1.5cm} - \frac{a}{2}  \frac{m^2 \sigma_{yy} + m |\Sigma|}{  \left(  m^2 + m \sigma_{xx}+ m \sigma_{yy} +  |\Sigma| \right) }, \ \ \   i=1,2. 
\end{align*}

\noindent It can be verified that the posterior risk  of the Bayes estimator   $ \delta_i^{B} (\boldsymbol{Z}_i)$  of $\theta_{y}^{(i)}$,  is given by
\begin{equation}\label{psot-risk}
r (\delta_i^{B}\left(  \boldsymbol{Z}_i\right) ) = \frac{a^2}{2}    \frac{  \left(  m^2 \sigma_{yy}+ |\Sigma| m \right) } {  \left(  |\Sigma| +  m^2  + m \sigma_{yy}+ m\sigma_{xx} \right) }.
\end{equation}
Since the posterior risk (\ref{psot-risk}) does not depend on  $\boldsymbol{Z}_i,\ i=1,2$, it follows form Theorem 3.1 of Sackrowitz and  Samuel-Cohen (1987) that the posterior risk $ r  \left( \delta_i^{B} \left(  \boldsymbol{Z}_i\right) \right) $, given in (\ref{psot-risk}), is also the Bayes risk  of  $ \delta_i^{B} \left( \boldsymbol{Z}_i\right)$.
Now an application of Lemma 3.2  of Sackrowitz and Samuel-Cohen (1987) leads to the result. 
\end{proof}
\begin{remark}
It can be easily checked  that the estimator  $\delta_{N,2}$ is  a limit of the Bayes  estimators $\delta_{B}\left(\boldsymbol{Z} \right) $ as $ m \to \infty$.
\end{remark}
\section{Some Admissibility Results}
In this section,  an admissible subclass of equivariant  estimators within the class $\mathcal{Q}_d$ is obtained,  using the  loss funtion given in (\ref{loss1.1}), where  
\begin{equation*}\label{sub-class}
\mathcal{Q}_d= \left\{ \delta_d : \delta_d (\boldsymbol{Z}_1,\boldsymbol{Z}_2) = Y_{[2]}+d, \  \forall \ d  \in \mathbb{R}  \right\}, 
\end{equation*}
where $\mathbb{R}$ denotes the real line. For obtaining the admissibility  of the estimators within the above class we require the following  lemma.
\begin{lemma}\label{lemma-pdf}
		Let $W=Y_{[2]}-\theta_{\textnormal{y}}^S$, and  $\rho =  \frac{\sigma_{xy}}{\sqrt{\sigma_{xx} \sigma_{yy}}}$. Then, $W$ has the   pdf   
	\begin{align*}
f_{W}(w \big|\boldsymbol{\theta}^*) =   \frac{1}{\sqrt{\sigma_{yy}} }  \phi \left( \frac{w}{\sqrt{\sigma_{yy}}} \right)  \left\{ \Phi \left( \frac{  \frac{\rho w }{\sqrt{\sigma_{yy}}}  + \frac{{\theta}_x }{\sqrt{\sigma_{xx}}} }{\sqrt{2-\rho^2}} \right)  + \Phi \left( \frac{\frac{\rho w }{\sqrt{\sigma_{yy}}} - \frac{{\theta}_x}{\sqrt{\sigma_{xx}}}}{\sqrt{2-\rho^2}} \right)  \right\}, \   \ w \in \mathbb{R}.
\end{align*}
\end{lemma}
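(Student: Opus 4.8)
The plan is to condition on the outcome of the selection rule and then exploit the structure of concomitants together with the bivariate normal conditional laws. First I would split according to the two selection events. On $\{X_1 > X_2\}$ one has $X_{(2)} = X_1$, so that $Y_{[2]} = Y_1$ and $\theta_{\text{y}}^S = \theta_y^{(1)}$, giving $W = Y_1 - \theta_y^{(1)}$; on $\{X_1 \le X_2\}$ one has $Y_{[2]} = Y_2$ and $\theta_{\text{y}}^S = \theta_y^{(2)}$, giving $W = Y_2 - \theta_y^{(2)}$. Hence the density of $W$ is the sum of two sub-densities, the $i$-th being the density of $Y_i - \theta_y^{(i)}$ restricted to the event that population $i$ is selected.

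Next I would write each sub-density as a product of a marginal factor and a conditional factor, namely $f_{Y_i - \theta_y^{(i)}}(w)\,P(\text{population } i \text{ selected} \mid Y_i - \theta_y^{(i)} = w)$. Since $Y_i - \theta_y^{(i)} \sim N(0, \sigma_{yy})$, the marginal factor equals $\frac{1}{\sqrt{\sigma_{yy}}}\phi\!\left(\frac{w}{\sqrt{\sigma_{yy}}}\right)$ for both $i$, which produces the common prefactor appearing in the statement. It then remains only to evaluate the two conditional selection probabilities.

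To compute, for instance, $P(X_1 > X_2 \mid Y_1 - \theta_y^{(1)} = w)$ I would use that, conditionally on $Y_1 - \theta_y^{(1)} = w$, the bivariate normal law gives $X_1 \sim N\!\left(\theta_x^{(1)} + \frac{\sigma_{xy}}{\sigma_{yy}} w,\ \sigma_{xx}(1-\rho^2)\right)$, while $X_2 \sim N(\theta_x^{(2)}, \sigma_{xx})$ remains independent. Therefore $X_1 - X_2$ is conditionally normal with mean $\theta_x^{(1)} - \theta_x^{(2)} + \frac{\sigma_{xy}}{\sigma_{yy}} w$ and variance $\sigma_{xx}(2-\rho^2)$, so that
\begin{equation*}
P\!\left(X_1 > X_2 \mid Y_1 - \theta_y^{(1)} = w\right) = \Phi\!\left(\frac{\frac{\theta_x^{(1)} - \theta_x^{(2)}}{\sqrt{\sigma_{xx}}} + \frac{\rho w}{\sqrt{\sigma_{yy}}}}{\sqrt{2-\rho^2}}\right),
\end{equation*}
after dividing numerator and denominator by $\sqrt{\sigma_{xx}}$ and using $\frac{\sigma_{xy}}{\sigma_{yy}\sqrt{\sigma_{xx}}} = \frac{\rho}{\sqrt{\sigma_{yy}}}$. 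The analogous computation for $\{X_1 \le X_2\}$ conditioned on $Y_2 - \theta_y^{(2)} = w$ yields the same expression with $\theta_x^{(1)} - \theta_x^{(2)}$ replaced by $\theta_x^{(2)} - \theta_x^{(1)}$.

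Finally I would add the two sub-densities and invoke the definition $\theta_x = \max(\theta_x^{(1)}, \theta_x^{(2)}) - \min(\theta_x^{(1)}, \theta_x^{(2)})$: since one of $\theta_x^{(1)} - \theta_x^{(2)}$ and $\theta_x^{(2)} - \theta_x^{(1)}$ equals $\theta_x$ and the other $-\theta_x$, the two $\Phi$ terms become exactly the symmetric pair in the statement, which also explains why $\theta_y$ does not enter the density. The hard part will be the conditioning on concomitants, that is, recognizing that given the observed $Y$ of the selected population the probability of that population being selected is governed by the conditional law of $X_1 - X_2$, together with the careful bookkeeping of the factor $\rho$ and of the signs so that the expression collapses to a function of $\theta_x$ alone.
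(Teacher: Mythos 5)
Your argument is correct: the decomposition of $W$ over the two selection events, the factorization of each sub-density as $\frac{1}{\sqrt{\sigma_{yy}}}\phi\bigl(\frac{w}{\sqrt{\sigma_{yy}}}\bigr)$ times the conditional selection probability, and the computation of $X_1-X_2$ given $Y_i-\theta_y^{(i)}=w$ as $N\bigl(\pm(\theta_x^{(1)}-\theta_x^{(2)})+\frac{\sigma_{xy}}{\sigma_{yy}}w,\ \sigma_{xx}(2-\rho^2)\bigr)$ all check out, and the symmetry of the sum of the two $\Phi$ terms under the sign flip justifies writing the result in terms of $\theta_x$ alone. The paper states this lemma without printing a proof, but your derivation is the standard one for such concomitant densities and is evidently what the authors intend.
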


 The following theorem establishes  the admissibility  of  the estimators  $\delta_d$ within the  class $\mathcal{Q}_d$. 
\begin{theorem}\label{thm-adm}
 Let 
$$d_0  = \left\{ \begin{array}{ll} -\frac{a\sigma_{yy}}{2}  -\frac{1}{a} \left[ \ln 2 +  \ln \left\{  \Phi \left(  \frac{a \sigma_{xy} }{\sqrt{2\sigma_{xx}}}\right)  \right\} \right],   & \textup{if} \ \ \sigma_{xy} >0 \vspace{2mm} \\
 -\frac{a\sigma_{yy}}{2}, & \textup{if} \  \ \sigma_{xy} \leq 0, 
  \end{array}\right.
   $$
   and 
$$d_1  = \left\{ \begin{array}{ll} -\frac{a\sigma_{yy}}{2},   & \textup{if} \ \ \sigma_{xy} \geq  0 \vspace{2mm} \\
   -\frac{a\sigma_{yy}}{2}  -\frac{1}{a} \left[ \ln 2 + \ln \left\{ \Phi \left(  \frac{a \sigma_{xy} }{\sqrt{2\sigma_{xx}}}\right)  \right\} \right], & \textup{if} \  \ \sigma_{xy} <0.  
\end{array}\right.
$$
 Let $\delta_d \in \mathcal{Q}_d$ be  given estimators of $\theta_{\textnormal{y}}^S$. Then, \\ (i)  Within the class $\mathcal{Q}_d$, the equivariant estimators $\delta_d$   are admissible for $d_0 \leq d \leq d_1$,    under the loss function (\ref{loss1.1}), 
\\
(ii)   The equivariant  estimators $\delta_d$ for  $  d \in \left(-\infty, d_0  \right) \cup \left( d_1, \infty \right)$  are inadmissible even within the class $\mathcal{Q}_d$.

\end{theorem}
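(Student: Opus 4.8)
The plan is to reduce the whole question to a one-dimensional risk comparison over the single effective parameter $\theta_x \in [0,\infty)$. Writing $W = Y_{[2]} - \theta_{\text{y}}^S$ as in Lemma \ref{lemma-pdf}, the LINEX risk of $\delta_d = Y_{[2]} + d$ is
\[
R(d,\boldsymbol{\theta}^*) = E\left[ e^{a(W+d)} - a(W+d) - 1 \right] = e^{ad} M_W(a) - a d - a E(W) - 1,
\]
where $M_W(a) = E(e^{aW})$. First I would evaluate $M_W(a)$ in closed form by integrating $e^{aw}$ against the density in Lemma \ref{lemma-pdf}; after the substitution $u = w/\sqrt{\sigma_{yy}}$ and the Gaussian identity $\int_{-\infty}^{\infty} e^{bu}\phi(u)\Phi(cu+e)\,du = e^{b^2/2}\Phi\!\left( \tfrac{cb+e}{\sqrt{1+c^2}} \right)$, the two $\Phi$-terms collapse to
\[
M_W(a) = e^{a^2\sigma_{yy}/2}\left[ \Phi\left( \frac{a\sigma_{xy} + \theta_x}{\sqrt{2\sigma_{xx}}} \right) + \Phi\left( \frac{a\sigma_{xy} - \theta_x}{\sqrt{2\sigma_{xx}}} \right) \right].
\]
The crucial structural point is that $R$ depends on the parameter only through $\theta_x$.

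Next I would observe that $\partial^2 R/\partial d^2 = a^2 e^{ad} M_W(a) > 0$, so for each fixed $\theta_x$ the risk is strictly convex in $d$ with the unique minimizer $d^*(\theta_x) = -\tfrac{1}{a}\ln M_W(a) = -\tfrac{a\sigma_{yy}}{2} - \tfrac{1}{a}\ln g(\theta_x)$, where $g(\theta_x) = \Phi\big((a\sigma_{xy}+\theta_x)/\sqrt{2\sigma_{xx}}\big) + \Phi\big((a\sigma_{xy}-\theta_x)/\sqrt{2\sigma_{xx}}\big)$. Differentiating, the sign of $g'(\theta_x)$ is that of $\phi\big((a\sigma_{xy}+\theta_x)/\sqrt{2\sigma_{xx}}\big) - \phi\big((a\sigma_{xy}-\theta_x)/\sqrt{2\sigma_{xx}}\big)$, which is constant for $\theta_x>0$, so $g$ (hence $d^*$) is strictly monotone. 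Thus $d^*$ sweeps out exactly the interval between its attained value $d^*(0)$ and its limit $\lim_{\theta_x\to\infty} d^*(\theta_x) = -\tfrac{a\sigma_{yy}}{2}$; using $g(0) = 2\Phi\big(a\sigma_{xy}/\sqrt{2\sigma_{xx}}\big)$ and $g(\infty)=1$, the ordering of these two endpoints is governed solely by $\mathrm{sign}(\sigma_{xy})$, which identifies the minimum $d_0$ and maximum $d_1$ with the piecewise formulas in the statement.

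For part (i), if $d$ is in the attained part of the range, choose $\theta_x^0$ with $d^*(\theta_x^0) = d$; then $\delta_d$ is the \emph{unique} risk minimizer at $\theta_x^0$, so any $\delta_{d'}$ with $d'\neq d$ has $R(d',\theta_x^0) > R(d,\theta_x^0)$ and cannot dominate, giving admissibility. The single value not attained at finite $\theta_x$ is the limit endpoint $d = -\tfrac{a\sigma_{yy}}{2}$, handled by a limiting argument: if some $\delta_{d'}$ dominated it, then $R(d',\theta_x)-R(d,\theta_x)\le 0$ for all $\theta_x$, and letting $\theta_x\to\infty$ (so $M_W(a)\to e^{a^2\sigma_{yy}/2}$) yields $\Delta_\infty(d') := (e^{ad'}-e^{ad})e^{a^2\sigma_{yy}/2} - a(d'-d) \le 0$; but $\Delta_\infty$ is strictly convex with $\Delta_\infty(d)=0$ and $\Delta_\infty'(d)=0$, so $\Delta_\infty(d')>0$ for every $d'\neq d$, a contradiction.

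For part (ii), if $d < d_0$ then $d < d_0 \le d^*(\theta_x)$ for every $\theta_x$, and strict convexity makes $R(\cdot,\theta_x)$ strictly decreasing on $(-\infty,d^*(\theta_x)]$, so $R(d_0,\theta_x) < R(d,\theta_x)$ for all $\theta_x$, i.e. $\delta_{d_0}$ strictly dominates $\delta_d$; symmetrically $\delta_{d_1}$ dominates $\delta_d$ when $d > d_1$, proving inadmissibility. The routine-but-fiddly work is the closed-form evaluation of $M_W(a)$ together with the bookkeeping that assigns $d^*(0)$ and $\lim_{\theta_x\to\infty}d^*(\theta_x)$ to the correct slots ($d_0$ versus $d_1$) according to $\mathrm{sign}(\sigma_{xy})$; the one genuinely conceptual obstacle is the admissibility of the non-attained boundary value $-\tfrac{a\sigma_{yy}}{2}$, which is precisely where the limiting argument replaces the unique-minimizer shortcut.
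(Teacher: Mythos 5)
Your proposal is correct and follows essentially the same route as the paper: both compute the pointwise risk-minimizing shift $d^*(\theta_x)=-\tfrac{a\sigma_{yy}}{2}-\tfrac{1}{a}\ln\bigl[\Phi\bigl(\tfrac{a\sigma_{xy}+\theta_x}{\sqrt{2\sigma_{xx}}}\bigr)+\Phi\bigl(\tfrac{a\sigma_{xy}-\theta_x}{\sqrt{2\sigma_{xx}}}\bigr)\bigr]$ from the density of $W$, show it is monotone in $\theta_x$ with range sweeping out $[d_0,d_1]$, and then use the unique-minimizer argument for interior $d$ and monotonicity of the risk in $d$ for part (ii). Your explicit limiting argument for the non-attained endpoint $d=-\tfrac{a\sigma_{yy}}{2}$ is in fact a more careful version of the paper's one-line appeal to ``continuity of $R$,'' but it is not a different method.
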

\begin{proof}
	For a fixed $\boldsymbol{\theta}^* \in \mathbb{R}_+^2$, define  $\Psi  (\boldsymbol{\theta}^*) = - \frac{1}{a} \ln \left[ E_{\theta^*} \left(   e^{a W } \right)  \right]$, where $W=Y_{[2]}-\theta_{\text{y}}^S$. Then, 	for fixed $\boldsymbol{\theta}^* \in \mathbb{R}_+^2$, the risk function of the estimators $\delta_d$ is given by
	\begin{align*}\label{risk-dc}
	R(\delta_d, \boldsymbol{\theta}^*)=  E_{\boldsymbol{\theta}^*} \left[  e^{a \left( Y_{[2]} +d - \theta_{\text{y}}^S  \right)}  - a \left( Y_{[2]} +d  - \theta_{\text{y}}^S  \right)  - 1    \right]
	\end{align*}
	It is easy to verify that $R (\delta_d, \boldsymbol{\theta}^*)$ is minimized at  $d =  \Psi (\boldsymbol{\theta}^*) =  - \frac{1}{a} \ln \left[ E_{\boldsymbol{\theta}^*} \left(   e^{a W  } \right)  \right]$. Using Lemma \ref{lemma-pdf}, we have
		\begin{align*}
\Psi (\boldsymbol{\theta}^*) = - \frac{a \sigma_{yy}}{2} - \frac{1}{a} \ln \left[H_a(\theta_x)\right], 

		\end{align*}
		where for  $a  \neq 0, \ H_a \left( \theta_x \right) =  \Phi \left(  \frac{a \sigma_{xy} + \theta_x }{\sqrt{2\sigma_{xx}}}\right)   +  \Phi \left(  \frac{a \sigma_{xy} - \theta_x }{\sqrt{2\sigma_{xx}}}\right)$.  
		Clearly, the behaviour of $H_a (\theta_x)$ depends on $\theta_x \in (0, \infty)$. It can be verified that for $ a \sigma_{xy} >0 
		\ \left( a \sigma_{xy} <0 \right) ,$ $H_a(\theta_x)$  is a decreasing (an increasing) function of $\theta_x \in (0, \infty)$. Using the monotonicity of $ H_a \left( \theta_x \right)$,  we conclude that for $\sigma_{xy}>0 \ \left( \sigma_{xy} <0 \right) $, $\Psi \left( \boldsymbol{\theta}^* \right) $ is an increasing  (a decreasing) function of $\theta_x$. Therefore,  for $\sigma_{xy}>0$  
	\begin{equation}\label{adm-inf-sup1}
	 	\inf_{\boldsymbol{\theta}^* \in \mathbb{R}_+^2} \Psi (\boldsymbol{\theta}^*) =d_0 \ \   \text{and} \ \  \sup_{\boldsymbol{\theta}^* \in \mathbb{R}_+^2} \Psi (\boldsymbol{\theta}^*) = \lim\limits_{{\theta}_x \to \infty} \Psi(\boldsymbol{\theta}^*)=  d_1,
		\end{equation}
		and for $\sigma_{xy}<0$
			\begin{equation}\label{adm-inf-sup2}
\inf_{\boldsymbol{\theta}^* \in \mathbb{R}_+^2} \Psi (\boldsymbol{\theta}^*) = \lim\limits_{{\theta}_x \to \infty} \Psi(\boldsymbol{\theta}^*)=  d_0	\ \   \text{and} \ \ \sup_{\boldsymbol{\theta}^* \in \mathbb{R}_+^2} \Psi (\boldsymbol{\theta}^*) =  d_1.
		\end{equation}
\noindent (i) Since $ \Psi (\boldsymbol{\theta}^*)$ is a continuous function of $\boldsymbol{\theta}^*$, it follows from (\ref{adm-inf-sup1}) and (\ref{adm-inf-sup2})  that any value of $d$ in the interval $ \left(d_0, d_1 \right)$ minimizes the risk function $R(\delta_d, \boldsymbol{\theta}^*)$ for some   $\boldsymbol{\theta}^* \in  \mathbb{R}_+^2$. Consequently, the estimators $\delta_d$, for   any value of  $d \in \left(d_0, d_1 \right)$ are admissible within the subclass $\mathcal{Q}_d$. The admissibility of the estimators $\delta_{d_0}$ and $\delta_{d_1}$, within the class $\mathcal{Q}_d$, follows form continuity of $R(\delta_d, \boldsymbol{\theta}^*)$.

\noindent (ii) For a fixed  $\boldsymbol{\theta}^* \in  \mathbb{R}_+^2$, the risk function $R(\delta_d, \boldsymbol{\theta}^*)$ is a decreasing (an increasing)  function of  $d$ for  $d< \Psi (\boldsymbol{\theta}^*) \, \left(d> \Psi (\boldsymbol{\theta}^*) \right)$. Since $d_0 \leq \Psi (\boldsymbol{\theta}^*)  \leq d_1, \forall \; \boldsymbol{\theta}^* \in \mathbb{R}_{+}^{2} $, it follows that the equivariant  estimators $\delta_d $   are dominated by  $\delta_{d_0} \ \text{for} \  d< d_0$ and  $\delta_{d_1} \ \text{for} \ d> d_1$. 
\end{proof}
\begin{remark}
		The  estimator $\delta_{N,2}$  
	is a  member of the class $\mathcal{Q}_d$ for $d=-\frac{1}{2}a \sigma_{yy}$. 
	Then, using Theorem \ref{thm-adm}, the estimator $\delta_{N,2}$ is admissible within the class $\mathcal{Q}_d$. 
	\end{remark}

\section{Some  Results of Improved Estimators}
In this section, using the loss function given in (\ref{loss1.1}), a sufficient condition for improving  equivariant estimators of $\theta_{\text{y}}^S$ in the general  class $\mathcal{Q}_c$  is derived.  The following lemmas are needed for establishing the result. 
\begin{lemma}\label{cond-pdf}
	Let  $T_1 = X_{(1)} - X_{(2)}, $ $T_2 = Y_{[1]} - Y_{[2]},$ $T_3= Y_{[2]} - \theta_{\textnormal{y}}^S,$ and $\rho =  \frac{\sigma_{xy}}{\sqrt{\sigma_{xx} \sigma_{yy}}}$. 
 For $t_1 \leq 0$, $t_2 \in \mathbb{R}$, the conditional pdf of $T_3$  given $T_1=t_1, T_2=t_2$ is given by
 \newpage  
	\begin{align*}
f_{T_3|T_1,T_2}  & \left( T_3|T_1,T_2 \right)  \\ 
 & = 	\sqrt{\frac{ 2}{\sigma_{yy}}} \left[ \frac{	\phi  \left(  	\sqrt{\frac{ 2}{\sigma_{yy}}}   
		 \left(  t_3 + \frac{t_2 - \boldsymbol{\theta}_y}{2}   \right) D_1 \left(  t_1,t_2, \boldsymbol{\theta}^* \right)
		\right) + 	\phi  \left(   \sqrt{\frac{ 2}{\sigma_{yy}}} 
		\left(  t_3 + \frac{t_2 + \boldsymbol{\theta}_y}{2}   \right) D_2 \left(  t_1,t_2,\boldsymbol{\theta}^* \right)
		\right) }{D_1 \left(  t_1,t_2, \boldsymbol{\theta}^* \right) + D_2 \left(  t_1,t_2, \boldsymbol{\theta}^* \right) }
		\right],
	\end{align*}
	where 
	\begin{align*}
	D_1 \left(  t_1,t_2, {\theta}^* \right) =  \phi  \left(  \frac{ t_2 - {\theta}_{y}}{\sqrt{2\sigma_{yy}}}         \right)     \phi  \left(   \frac{\rho  \left(  \frac{ t_2 - {\theta}_{y}}{\sqrt{ \sigma_{yy}}} \right)    -  \left(  \frac{ t_1 - {\theta}_{x}}{\sqrt{ \sigma_{xx}}} \right)  }  {\sqrt{2(1-\rho^2)}} \right), 
	\end{align*}	
	and 
		\begin{align*}
	D_2 \left(  t_1,t_2, {\theta}^* \right) =  \phi  \left(  \frac{ t_2 + {\theta}_{y}}{\sqrt{2 \sigma_{yy}}}         \right)     \phi  \left(   \frac{\rho  \left(  \frac{ t_2 +  {\theta}_{y}}{\sqrt{ \sigma_{yy}}} \right)    -  \left(  \frac{ t_1 +  {\theta}_{x}}{\sqrt{2\sigma_{xx}}} \right) } {\sqrt{2(1-\rho^2)}} \right). 
	\end{align*}	
	\item [(ii)]
	  	For  $t_1 \leq 0$ and $t_2 \in \mathbb{R}$, 
\begin{equation*}
E \left( e^{aT_3 }  \big| T_1=t_1, T_2=t_2\right)  = e^{ \frac{ a^2 \sigma_{yy}}{4}   -\frac{a t_2}{2}  }  \left[ \Delta \left( t_1,t_2, \boldsymbol{\theta}^* \right)  \right],
\end{equation*}
	where for  $t_1 \leq 0$ and $t_2 \in \mathbb{R}$, 
\begin{equation}\label{Delta}
\Delta \left( t_1,t_2, \boldsymbol{\theta}^* \right) = \frac{D_1 \left(  t_1,t_2, \boldsymbol{\theta}^* \right)e^{\frac{a\boldsymbol{\theta}_{y}}{2}}+D_2 \left(  t_1,t_2, \boldsymbol{\theta}^* \right) e^{\frac{-a\boldsymbol{\theta}_{y}}{2}}}{D_1 \left(  t_1,t_2, \boldsymbol{\theta}^* \right) +D_2 \left(  t_1,t_2, \boldsymbol{\theta}^* \right)},  \ \ \  \forall \  \boldsymbol{\theta}^* \in \mathbb{R}_+^2. 
\end{equation}
%
\end{lemma}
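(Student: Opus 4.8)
The plan is to exploit the piecewise, selection-driven definition of $(T_1,T_2,T_3)$ and reduce everything to ordinary trivariate-normal calculations. On the event $\{X_1>X_2\}$ (population $1$ selected) one has $(T_1,T_2,T_3)=(X_2-X_1,\ Y_2-Y_1,\ Y_1-\theta_y^{(1)})$, while on $\{X_1\le X_2\}$ one has $(T_1,T_2,T_3)=(X_1-X_2,\ Y_1-Y_2,\ Y_2-\theta_y^{(2)})$. In each representation the first coordinate equals $X_{(1)}-X_{(2)}$, so its sign records exactly which population was selected; hence for a Borel set $B\subseteq\{t_1<0\}$ the two events contribute disjointly, and on $\{t_1\le 0\}$ the joint density of $(T_1,T_2,T_3)$ is simply the sum of the two trivariate normal densities of the vectors displayed above. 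This observation is the crux: it converts the awkward random parameter $\theta_{\text{y}}^S$ into a clean sum of two Gaussians.

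First I would record the mean vector and covariance matrix of each trivariate normal. Using the independence of $\pi_1,\pi_2$ and the common $\boldsymbol{\Sigma}$, a direct computation gives, for the population-$1$ term, $\mathrm{Var}(T_1)=2\sigma_{xx}$, $\mathrm{Var}(T_2)=2\sigma_{yy}$, $\mathrm{Var}(T_3)=\sigma_{yy}$, $\mathrm{Cov}(T_1,T_2)=2\sigma_{xy}$, $\mathrm{Cov}(T_1,T_3)=-\sigma_{xy}$, $\mathrm{Cov}(T_2,T_3)=-\sigma_{yy}$, with the analogous matrix (populations swapped) for the other term. The key algebraic step is the regression of $T_3$ on $(T_1,T_2)$: since the $(T_1,T_2)$-block is $2\boldsymbol{\Sigma}$, one finds $\mathrm{Cov}\bigl(T_3,(T_1,T_2)\bigr)\,(2\boldsymbol{\Sigma})^{-1}=(0,-\tfrac12)$, so that in both terms $T_3\mid(T_1,T_2)$ is normal with common conditional variance $\sigma_{yy}/2$ and conditional mean $-\tfrac12(t_2\mp\theta_y)$. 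Factoring each trivariate density as (bivariate $(T_1,T_2)$ marginal)$\times$(conditional of $T_3$), and observing that $D_1$ and $D_2$ are, up to a common constant that cancels, precisely the two $N(\cdot,2\boldsymbol{\Sigma})$ marginals of $(T_1,T_2)$ under the two selection outcomes, yields part (i): the conditional law of $T_3$ given $(T_1,T_2)$ is the two-component normal mixture with weights $D_1,D_2$, means $-\tfrac12(t_2-\theta_y)$ and $-\tfrac12(t_2+\theta_y)$, and variance $\sigma_{yy}/2$.

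Part (ii) then follows with no further probabilistic input. Integrating $e^{at_3}$ against the mixture and using that the MGF of $N(\mu,\sigma_{yy}/2)$ is $\exp(a\mu+\tfrac{a^2}{4}\sigma_{yy})$, the two components contribute $\exp(\tfrac{a^2\sigma_{yy}}{4}-\tfrac{at_2}{2})\,e^{\pm a\theta_y/2}$; pulling the common factor $\exp(\tfrac{a^2\sigma_{yy}}{4}-\tfrac{at_2}{2})$ out of the weighted average reproduces the stated expression with $\Delta$ equal to $(D_1 e^{a\theta_y/2}+D_2 e^{-a\theta_y/2})/(D_1+D_2)$.

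The main obstacle is not conceptual but the bookkeeping in the second step. Computing $T_3\mid(T_1,T_2)$ requires the $3\times 3$ covariance inversion (equivalently the regression coefficients), and one must take care to match each trivariate term to the correct mixture component and weight: the signs of $\theta_x^{(1)}-\theta_x^{(2)}$ and $\theta_y^{(1)}-\theta_y^{(2)}$ decide which selection outcome yields the $+\theta_y$ and which the $-\theta_y$ component, and it is here that the absolute-difference conventions $\theta_x,\theta_y\ge 0$ and the labelling of the better population are used to bring the two bivariate marginals into the exact forms $D_1,D_2$. Once the identity $\mathrm{Cov}\bigl(T_3,(T_1,T_2)\bigr)(2\boldsymbol{\Sigma})^{-1}=(0,-\tfrac12)$ is in hand, the common conditional variance $\sigma_{yy}/2$ makes the mixture structure transparent and the remaining manipulations are routine.
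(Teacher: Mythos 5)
Your argument is correct and is exactly the derivation the paper intends but omits: the lemma is stated without proof, and its mixture form with weights $D_1,D_2$ is precisely the two-term decomposition over the selection events $\{X_1>X_2\}$ and $\{X_1\le X_2\}$ that you carry out, with the regression identity $\mathrm{Cov}\bigl(T_3,(T_1,T_2)\bigr)(2\boldsymbol{\Sigma})^{-1}=(0,-\tfrac12)$ yielding the common conditional variance $\sigma_{yy}/2$ and the conditional means $-\tfrac12(t_2\mp\theta_y)$, after which part (ii) is the normal MGF computation you describe. The only caveat --- a defect of the paper's statement rather than of your proof --- is that pairing $t_1-\theta_x$ with $t_2-\theta_y$ in $D_1$ (and the two $+$ signs in $D_2$) presupposes that $\theta_x^{(1)}-\theta_x^{(2)}$ and $\theta_y^{(1)}-\theta_y^{(2)}$ have the same sign, which is exactly the bookkeeping point you flag as the delicate step.
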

\begin{lemma}\label{inf-sup}
	For  $t_1 \leq 0$ and $t_2 \in \mathbb{R}$, define 
	\begin{align*}
	\varphi \left(  t_1,t_2, \boldsymbol{\theta}^* \right) &  = -\frac{1}{a} \ln \left[  E \left( e^{aT_3}  \big| T_1=t_1, T_2=t_2\right)  \right] \\ 
	& = \frac{t_2}{2} -\frac{a \sigma_{yy}}{4}   - \frac{1}{a} \ln   \left[ \Delta \left( t_1,t_2,\boldsymbol{\theta}^* \right) \right] \ \ \text{(Using Lemma \ref{cond-pdf} (ii))},
	\end{align*}
where $\Delta (\cdot)$ is given by  (\ref{Delta}). Then, for  $t_1 \leq 0$ and $t_2 \in \mathbb{R}$, 
\begin{equation*}
\varphi_I \left(  t_1,t_2  \right) \leq  \varphi \left(  t_1,t_2, \boldsymbol{\theta}^* \right) \leq  \varphi_S \left(  t_1,t_2 \right), \ \  \forall \, \boldsymbol{\theta}^* \in \mathbb{R}_+^2, 
\end{equation*}
where 
	\begin{align*}
\varphi_{I}  \left(  t_1,t_2  \right) &= \left\{   \begin{array}{ll} 
\frac{t_2}{2}- \frac{a \sigma_{yy}}{4}, & \textup{if} \  \  t_1 \xi -  \rho t_2<0 \ \text{and} \   t_2- \xi \rho t_1  <- a \frac{\sigma_{yy}}{2}  (1-\rho^2)
\vspace{3mm}	\\ 
-\infty, &  \textup{otherwise}, 
\end{array}
\right.  
\end{align*}			
and
\begin{align*}
\varphi_{S}  \left(  t_1,t_2  \right)
&= \left\{   \begin{array}{ll} 
\frac{t_2}{2}- \frac{a \sigma_{yy}}{4}, &\textup{if} \ \   t_1 \xi -  \rho t_2 > 0 \ \text{and} \  t_2 - \xi \rho t_1 > -a \frac{\sigma_{yy}}{2}  (1-\rho^2)
\vspace{3mm} \\ 
\infty, &  \textup{otherwise}, 
\end{array}
\right.
\end{align*}
	where $  \xi=\sqrt{\frac{\sigma_{yy}}{\sigma_{xx}}}$. 
	\end{lemma}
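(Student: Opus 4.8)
The plan is to collapse the two-sided bound into a single sign condition on an affine function of $\boldsymbol{\theta}^*$. Write $c = \frac{t_2}{2} - \frac{a\sigma_{yy}}{4}$, so that by Lemma \ref{cond-pdf}(ii), $\varphi(t_1,t_2,\boldsymbol{\theta}^*) = c - \frac{1}{a}\ln[\Delta(t_1,t_2,\boldsymbol{\theta}^*)]$, and what must be controlled uniformly over $\boldsymbol{\theta}^* \in \mathbb{R}_+^2$ is the sign of $\varphi - c$, i.e.\ the sign of $\ln\Delta$. First I would record the elementary identity $\Delta - 1 = (e^{a\theta_y/2}-1)(D_1 - D_2 e^{-a\theta_y/2})/(D_1+D_2)$, obtained by writing $1 - e^{-a\theta_y/2} = e^{-a\theta_y/2}(e^{a\theta_y/2}-1)$ in the numerator of $\Delta-1$. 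Since $D_1+D_2>0$ and $\mathrm{sign}(e^{a\theta_y/2}-1)=\mathrm{sign}(a)$ for $\theta_y>0$, this gives $\mathrm{sign}(\Delta-1)=\mathrm{sign}(a)\,\mathrm{sign}(g)$, where $g(\theta_x,\theta_y):=\ln D_1 - \ln D_2 + \frac{a\theta_y}{2}$. Combining this with $\mathrm{sign}(\varphi-c)=-\mathrm{sign}(a)\,\mathrm{sign}(\ln\Delta)=-\mathrm{sign}(a)\,\mathrm{sign}(\Delta-1)$ yields the clean reduction $\mathrm{sign}(\varphi-c)=-\mathrm{sign}(g)$ for every $\boldsymbol{\theta}^*$ (with $\varphi=c$ and $g=0$ at $\theta_y=0$), independent of the sign of $a$. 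Hence $\varphi\ge c$ throughout iff $g\le 0$ on $\mathbb{R}_+^2$, and $\varphi\le c$ throughout iff $g\ge 0$ on $\mathbb{R}_+^2$.

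The second step is to show that $g$ is in fact a homogeneous linear form and to read off the signs of its two coefficients. Substituting the explicit Gaussian expressions for $D_1,D_2$ from Lemma \ref{cond-pdf} and expanding $\ln D_1 - \ln D_2$, the common leading coefficient $\frac{1}{4\sigma_{yy}(1-\rho^2)}$ of the two quadratic forms forces the $\theta_x^2$, $\theta_y^2$, and $\theta_x\theta_y$ terms to cancel, leaving an affine function; evaluating at $\theta_x=\theta_y=0$ gives $D_1=D_2$, so $g(0,0)=0$ and $g$ is linear. A direct computation, using $\xi=\sqrt{\sigma_{yy}/\sigma_{xx}}$ and $\rho=\sigma_{xy}/\sqrt{\sigma_{xx}\sigma_{yy}}$, then writes $g(\theta_x,\theta_y)=\kappa_x\theta_x+\kappa_y\theta_y$, where $\kappa_x$ is a positive multiple of $t_1\xi-\rho t_2$ and $\kappa_y$ is a positive multiple of $t_2-\xi\rho t_1 + a\frac{\sigma_{yy}}{2}(1-\rho^2)$.

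Finally I would invoke the elementary fact that a linear form $\kappa_x\theta_x+\kappa_y\theta_y$ is $\le 0$ on the closed quadrant $\mathbb{R}_+^2$ iff $\kappa_x\le 0$ and $\kappa_y\le 0$, and $\ge 0$ there iff both coefficients are $\ge 0$. Translating through the sign dictionary of the first step, $g\le 0$ on $\mathbb{R}_+^2$ is exactly the pair of conditions $t_1\xi-\rho t_2\le 0$ and $t_2-\xi\rho t_1\le -a\frac{\sigma_{yy}}{2}(1-\rho^2)$ defining the finite branch of $\varphi_I$, whence $\varphi\ge c=\varphi_I$; reversing all inequalities gives $\varphi\le c=\varphi_S$. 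When the relevant pair of conditions fails, at least one of $\kappa_x,\kappa_y$ has the opposite sign, so $g$ (hence $\ln\Delta$) is unbounded of that sign along a coordinate ray, forcing $\varphi$ to be unbounded below (resp.\ above); this produces the $-\infty$ (resp.\ $+\infty$) branch and confirms that the stated bounds are the actual infimum and supremum, attained at $\theta_y=0$ in the finite cases.

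The main obstacle I anticipate is the second step: verifying that the quadratic and cross terms in $\ln D_1 - \ln D_2$ cancel and then assembling the two linear coefficients into the precise combinations $t_1\xi-\rho t_2$ and $t_2-\xi\rho t_1 + a\frac{\sigma_{yy}}{2}(1-\rho^2)$. This is a bookkeeping-heavy calculation in which the normalizations $\sqrt{2\sigma_{xx}}$, $\sqrt{2\sigma_{yy}}$, and $\sqrt{2(1-\rho^2)}$ inside the two $\phi$ factors of $D_1$ and $D_2$ must be tracked carefully. By contrast, the sign reduction of the first step and the quadrant argument of the third step are short once that computation is in hand.
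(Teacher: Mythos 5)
Your proposal is correct, and it supplies a complete argument where the paper gives none: Lemma \ref{inf-sup} is stated without proof in the text. Your reduction is the right one. Writing $c=\frac{t_2}{2}-\frac{a\sigma_{yy}}{4}$, the factorization $\Delta-1=\bigl(e^{a\theta_y/2}-1\bigr)\bigl(D_1-D_2e^{-a\theta_y/2}\bigr)/(D_1+D_2)$ together with $\mathrm{sign}(\ln\Delta)=\mathrm{sign}(\Delta-1)$ does give $\mathrm{sign}(\varphi-c)=-\mathrm{sign}(g)$ with $g=\ln D_1-\ln D_2+\frac{a\theta_y}{2}$, independently of the sign of $a$. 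The bookkeeping step you flag as the main risk comes out exactly as you predict: with the symmetric forms of $D_1$ and $D_2$ one finds $\ln D_1-\ln D_2=\frac{t_2\theta_y}{\sigma_{yy}}+\frac{1}{1-\rho^2}\bigl(\rho\frac{t_2}{\sqrt{\sigma_{yy}}}-\frac{t_1}{\sqrt{\sigma_{xx}}}\bigr)\bigl(\rho\frac{\theta_y}{\sqrt{\sigma_{yy}}}-\frac{\theta_x}{\sqrt{\sigma_{xx}}}\bigr)$, the quadratic terms cancelling because the two squares differ only in the sign of the $\theta$-dependent part; the coefficients of $\theta_x$ and $\theta_y$ in $g$ are then $\frac{t_1\xi-\rho t_2}{(1-\rho^2)\sqrt{\sigma_{xx}\sigma_{yy}}}$ and $\frac{1}{\sigma_{yy}(1-\rho^2)}\bigl[t_2-\xi\rho t_1+\frac{a\sigma_{yy}}{2}(1-\rho^2)\bigr]$, precisely the positive multiples you need, and the closed-quadrant argument finishes the proof. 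Two caveats. First, the argument requires $\rho^2<1$, since you divide by $1-\rho^2$ and $D_1,D_2$ degenerate otherwise (the paper's simulations at $\rho=\pm1$ fall outside this lemma's reach). Second, you must read the displayed $D_2$ in Lemma \ref{cond-pdf} as carrying the same normalization $\sqrt{\sigma_{xx}}$ as $D_1$ in its second factor; the printed $\sqrt{2\sigma_{xx}}$ is evidently a typographical error, and taken literally it would destroy the cancellation of the quadratic terms on which your step two relies. Your closing observation that the finite branches are the actual infimum and supremum is more than the lemma asserts, but it is what makes the improvement in Theorem \ref{thm-s-c} non-vacuous.
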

Now,  
 we exploit the approach of Brewster and Zidek (1974)  to obtain a sufficient condition for improving the equivariant estimators of the form $ \delta_{\varphi} \left( \boldsymbol{Z} \right) = Y_{[2]}+ \varphi \left( T_1,T_2 \right) $, where  $T_1 = X_{(1)} - X_{(2)} $ and $T_2 = Y_{[1]} - Y_{[2]}.$

\begin{theorem}\label{thm-s-c}
	Consider an equivariant estimator $\delta_{\varphi} \left( \boldsymbol{Z} \right)  = Y_{[2]} + \varphi \left( T_1,T_2 \right) $ of $\theta_{\textnormal{y}}^S$, where $\varphi (\cdot)$ denotes a function of $T_1$ and $T_2$. 
 Suppose that  $P  \left(
  \left\{ \varphi(T_1, T_2) \leq  \varphi_{I}(T_1, T_2)  \right\} \right.$  $\left.   \cup
 \left\{ \varphi(T_1, T_2) \geq  \varphi_{S}(T_1, T_2)  \right\} \right)  >0$, where $\varphi_{I}(\cdot)$ and $\varphi_{S}(\cdot)$  are as given in Lemma \ref{inf-sup}. Then, using the loss function given in (\ref{loss1.1}), the estimator $\delta_{\varphi} (\cdot)$  is improved by 
$\delta_{\varphi}^* (\boldsymbol{Z})=Y_{[2]} + \varphi^*(T_1, T_2)$, where 
\begin{equation*}
\varphi^*(T_1, T_2)=  \left\{ \begin{array}{ll} 
 \varphi_{I} (T_1, T_2),   & \mbox{if}  \ \   \varphi(T_1, T_2) \leq   \varphi_{I}(T_1, T_2) \vspace{3mm} \\
  \varphi (T_1, T_2) , & \mbox{if}  \ \ \varphi_{I}(T_1, T_2) <  \varphi(T_1, T_2) <  \varphi_{S}(T_1, T_2) \vspace{3mm} \\ 
  \varphi_{S}(T_1, T_2),  & \mbox{if} \ \ \varphi(T_1, T_2) \geq \varphi_{S}(T_1, T_2).
 \end{array} \right.
\end{equation*} 
\end{theorem}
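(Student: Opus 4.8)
The plan is to run a Brewster--Zidek (1974) type conditioning argument, exploiting the strict convexity of the LINEX loss in its first argument together with the parameter-free sandwiching supplied by Lemma \ref{inf-sup}.

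First I would express the risk of $\delta_\varphi$ by conditioning on $(T_1,T_2)$. Writing $\delta_\varphi(\boldsymbol{Z})-\theta_{\text{y}}^S = T_3 + \varphi(T_1,T_2)$ with $T_3 = Y_{[2]}-\theta_{\text{y}}^S$, the risk takes the form
\begin{equation*}
R(\delta_\varphi, \boldsymbol{\theta}^*) = E_{\boldsymbol{\theta}^*}\left[ g_{\boldsymbol{\theta}^*}\bigl(\varphi(T_1,T_2); T_1, T_2\bigr)\right],
\end{equation*}
where, for fixed $(t_1,t_2)$ with $t_1\le 0$, the inner conditional risk
\begin{equation*}
g_{\boldsymbol{\theta}^*}(v; t_1, t_2) = e^{av}\, E\bigl(e^{aT_3}\mid t_1, t_2\bigr) - a v - a\,E(T_3\mid t_1,t_2) - 1
\end{equation*}
is a strictly convex function of $v$ (since $a\neq 0$). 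Differentiating in $v$ and setting the derivative to zero shows that $g_{\boldsymbol{\theta}^*}(\cdot;t_1,t_2)$ attains its unique minimum exactly at $v=\varphi(t_1,t_2,\boldsymbol{\theta}^*) = -\frac1a\ln E(e^{aT_3}\mid t_1,t_2)$, the quantity introduced in Lemma \ref{inf-sup}. Hence $g_{\boldsymbol{\theta}^*}(\cdot;t_1,t_2)$ is strictly decreasing to the left of this minimizer and strictly increasing to its right.

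Next I would invoke Lemma \ref{inf-sup}, which guarantees $\varphi_I(t_1,t_2)\le \varphi(t_1,t_2,\boldsymbol{\theta}^*)\le \varphi_S(t_1,t_2)$ for every $\boldsymbol{\theta}^*\in\mathbb{R}_+^2$. This is the crux: although the conditional minimizer depends on the unknown parameter, it is trapped, uniformly in $\boldsymbol{\theta}^*$, inside the parameter-free interval $[\varphi_I,\varphi_S]$. Comparing $\varphi^*$ with $\varphi$ pointwise then gives: on $\{\varphi\le\varphi_I\}$ we have $\varphi\le\varphi_I\le\varphi(t_1,t_2,\boldsymbol{\theta}^*)$, so both $\varphi$ and $\varphi^*=\varphi_I$ sit on the decreasing branch and $g_{\boldsymbol{\theta}^*}(\varphi_I)\le g_{\boldsymbol{\theta}^*}(\varphi)$; symmetrically, on $\{\varphi\ge\varphi_S\}$ both sit on the increasing branch and $g_{\boldsymbol{\theta}^*}(\varphi_S)\le g_{\boldsymbol{\theta}^*}(\varphi)$; and on the middle event $\varphi^*=\varphi$. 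Thus $g_{\boldsymbol{\theta}^*}(\varphi^*;T_1,T_2)\le g_{\boldsymbol{\theta}^*}(\varphi;T_1,T_2)$ pointwise, with strict inequality on the set where $\varphi$ lies outside $(\varphi_I,\varphi_S)$.

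Finally I would take expectation over $(T_1,T_2)$: monotonicity of expectation yields $R(\delta_\varphi^*,\boldsymbol{\theta}^*)\le R(\delta_\varphi,\boldsymbol{\theta}^*)$ for all $\boldsymbol{\theta}^*$, and the hypothesis $P(\{\varphi\le\varphi_I\}\cup\{\varphi\ge\varphi_S\})>0$ promotes this to a strict inequality, so $\delta_\varphi^*$ improves $\delta_\varphi$. The main obstacle is not the convexity bookkeeping but making the uniform sandwich of Lemma \ref{inf-sup} carry the argument: one must check that on the regions where $\varphi_I=-\infty$ or $\varphi_S=+\infty$ no truncation is performed (the corresponding events are empty), so that $\varphi^*$ remains a genuine estimator and the pointwise risk comparison is valid everywhere. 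Here the conditioning on $t_1\le 0$, forced by the selection rule $X_{(2)}=\max(X_1,X_2)$, is exactly what makes Lemma \ref{cond-pdf} and hence Lemma \ref{inf-sup} available throughout.
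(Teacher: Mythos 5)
Your proposal is correct and follows essentially the same route as the paper: a Brewster--Zidek conditioning on $(T_1,T_2)$, identification of the conditional minimizer $\varphi(t_1,t_2,\boldsymbol{\theta}^*)=-\frac{1}{a}\ln E\left(e^{aT_3}\mid t_1,t_2\right)$, and the parameter-free sandwich $\varphi_I\leq\varphi(\cdot,\boldsymbol{\theta}^*)\leq\varphi_S$ from Lemma \ref{inf-sup}. The only cosmetic difference is that you invoke convexity and monotonicity of the conditional risk on either side of its minimizer, whereas the paper carries out the same comparison by explicit algebra on the risk difference followed by the inequality $e^{x}>1+x$ for $x\neq 0$.
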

\begin{proof}
	(i) \ 	Consider the risk difference of the estimators $\delta_\varphi$ and $\delta_\varphi^*$ and 
	\begin{equation*}
	R(\boldsymbol{\theta}^*, \delta_\varphi)-R(\boldsymbol{\theta}^*, \delta_\varphi^*)= E \left[ K_{\boldsymbol{\theta}^*} (T_1, T_2) \right], 
	\end{equation*}
	where, for $t_1 \leq 0, \ t_2 \in \mathbb{R}$,
		\begin{align*}
	K_{\boldsymbol{\theta}^*} (t_1,t_2) &=  E \left[  e^{a\left( \delta_{\varphi} (\boldsymbol{Z})-\theta_{\text{y}}^S\right)  }-a(\delta_{\varphi}(\boldsymbol{Z})-\theta_{\text{y}}^S)-1 \Big|\, T_1=t_1, T_2=t_2 \right] \vspace{4mm} \\
	& \hspace*{3cm} -  E \left[  e^{a\left( \delta_{\varphi}^*(\boldsymbol{Z})-\theta_{\text{y}}^S\right) }-a \left( \delta_{\varphi}^* (\boldsymbol{Z})-\theta_{\text{y}}^S \right) -1  \Big| \, T_1=t_1, T_2=t_2 \right] \vspace{4mm} \\
	& =  E \left[  e^{a\left( \delta_{\varphi}  (\boldsymbol{Z})-\theta_{\text{y}}^S \right) }-e^{a\left( \delta_{\varphi}^* (\boldsymbol{Z})-\theta_{\text{y}}^S \right) } \Big| T_1=t_1, T_2=t_2 \right] \vspace{4mm} \\ 
	& \hspace*{3cm} -aE \left[ \delta_{\varphi} (\boldsymbol{Z})-\delta_{\varphi}^* (\boldsymbol{Z}) \Big| T_1=t_1, T_2=t_2 \right] \vspace{4mm} \\
	& = E \left[  e^{a \left( Y_{[2]}+\varphi(t_1,t_2)-\theta_{\text{y}}^S \right) }-e^{a\left( Y_{[2]}+\varphi^* (t_1,t_2)-\theta_{\text{y}}^S \right) } \big| T_1=t_1, T_2=t_2 \big]-a\big[ \varphi(t_1,t_2)-\varphi^* (t_1,t_2)\right] \\
	& =  \left[ e^{a\varphi(t_1,t_2)}-e^{ a \varphi^*(t_1,t_2)} \right]  E \left(e^{a\left( Y_{[2]} - \theta_{\text{y}}^S\right) } \big| T_1=t_1, T_2=t_2 \right)  -  a\left[\varphi(t_1,t_2)-\varphi^* (t_1,t_2)\right]\\
	& =  \left[ e^{a\varphi(t_1,t_2)}-e^{a\varphi^* \left( t_1,t_2\right) } \right] e^{-a\varphi(t_1,t_2,\boldsymbol{\theta}^*)} - a\left[\varphi\left( t_1,t_2\right) -\varphi^* \left( t_1,t_2\right) \right].
	\end{align*}
The last line of the above expression follows from  Lemma \ref{cond-pdf} and  Lemma \ref{inf-sup}. 
 Now,  for a fixed $t_1 \leq 0$ and $t_2 \in \mathbb{R},$  if  $\varphi(t_1,t_2)\leq \varphi_{I}(t_1,t_2)  \left( \text{so that} \, \varphi^* (t_1,t_2)=\varphi_{I}(t_1,t_2) \right) $, then,  
	\begin{align*}
	K_{\boldsymbol{\theta}^*}(t_1,t_2)  &=  \left[ e^{a\varphi(t_1,t_2)}-e^{a\varphi_{I}(t_1,t_2)} \right] e^{-a\varphi(t_1,t_2,\boldsymbol{\theta}^*)} - a\left(\varphi(t_1,t_2)-\varphi_{I}(t_1,t_2)\right) \\
	&\geq \left[ e^{a\varphi(t_1,t_2)}-e^{a\varphi_{I}(t_1,t_2)} \right] e^{-a\varphi_I (t_1,t_2)} - a\left(\varphi(t_1,t_2)-\varphi_{I}(t_1,t_2)\right) \\
	&= \left[ e^{a\{\varphi(t_1,t_2)-\varphi_{I}(t_1,t_2)\}} -1\right]-a\left[\varphi(t_1,t_2)-\varphi_{I}(t_1,t_2)\right].
	\end{align*}
	Using the property  $e^x > 1+x, \, \forall \, x \neq 0$, we have  $K_{\boldsymbol{\theta}^*}  \left( t_1,t_2 \right)  \geq 0$.  If  $\varphi_I(t_1,t_2) <  \varphi(t_1,t_2) < \varphi_{S}(t_1,t_2) \left( \text{so that} \varphi^* (t_1,t_2)=\varphi(t_1,t_2)\right) $, then,  $K_{\boldsymbol{\theta}^*}(t_1,t_2)=0$.
 If  $\varphi (t_1,t_2) \geq \varphi_{S}(t_1,t_2)  \left( \text{so that} \, \varphi^* (t_1,t_2)=\varphi_{S}(t_1,t_2) \right) $, then, 
 	\begin{align*}
 K_{\boldsymbol{\theta}^*}(t_1,t_2)  &=  \left[ e^{a\varphi(t_1,t_2)}-e^{a\varphi_{S}(t_1,t_2)} \right] e^{-a\varphi(t_1,t_2,\boldsymbol{\theta}^*)} - a\left(\varphi(t_1,t_2)-\varphi_{S}(t_1,t_2)\right) \\
 &\geq \left[ e^{a\varphi(t_1,t_2)}-e^{a\varphi_{S}(t_1,t_2)} \right] e^{-a\varphi_I (t_1,t_2)} - a\left(\varphi(t_1,t_2)-\varphi_{S}(t_1,t_2)\right) \\
 &= \left[ e^{a\{\varphi(t_1,t_2)-\varphi_{S}(t_1,t_2)\}} -1\right]-a\left[\varphi(t_1,t_2)-\varphi_{S}(t_1,t_2)\right].
 \end{align*}
		Again using the property  $e^x > 1+x, \, \forall \, x \neq 0$, we have  $K_{\boldsymbol{\theta}^*}  \left( t_1,t_2 \right)  \geq 0$.
\noindent	Now, since  $P \left(
	\left\{ \varphi(T_1, T_2) \leq  \varphi_{I}(T_1, T_2)  \right\}  \right.$ $ \left. \cup
	\left\{ \varphi(T_1, T_2) \geq  \varphi_{S}(T_1, T_2)  \right\} \right)  >0$, we conclude that 
	\begin{equation*}
	R(\boldsymbol{\theta}^*, \delta_{\varphi})-R(\boldsymbol{\theta}^*, \delta_{\varphi}^*) \geq 0,  \ \ \forall \,  \boldsymbol{\theta}^*  \in  \mathbb{R}_+^2,
	\end{equation*}
and the srtict inequality holds for some $ \boldsymbol{\theta}^*  \in  \mathbb{R}_+^2.$
Hence the result follows. 
\end{proof}

\section*{Improved Estimators}
	Here,  we provide some improved estimators of  $\theta_{\text{y}}^S$ by using the reuslt of Theorem \ref{thm-s-c}.  

\noindent	  \textbf{Improved estimator 1:} 
		For  $a>0$ and  $ 0 < \rho \leq 1$,  the estimator $\delta_{N,1}$  is improved by
		\begin{align*}
		\delta_{N,1}^{I1}  \left(  \boldsymbol{Z}_1,\boldsymbol{Z}_2  \right)
		&= \left\{   \begin{array}{ll} 
		\frac{Y_{[1]}+Y_{[2]} }{2}- \frac{a \sigma_{yy}}{4}, & \textup{if}   \ \ T_1 > \frac{\rho T_2}{\xi} \ \textup{and}  \      \frac{a \sigma_{yy}}{2}  \geq  T_2  >\xi \rho T_1 -a \frac{\sigma_{yy}}{2} (1-\rho^2)
		\vspace{3mm} \\ 
		\delta_{N,1}, &  \textup{otherwise}. 
		\end{array}
		\right.
		\end{align*}
	
\noindent  \textbf{Improved estimator 2:} 
For  $a<0$ and  $ -1 \leq \rho <0$,  the estimator $\delta_{N,1}$  is improved by
\begin{align*}
\delta_{N,1}^{I2}  \left(  \boldsymbol{Z}_1,\boldsymbol{Z}_2  \right)
&= \left\{   \begin{array}{ll} 
\frac{Y_{[1]}+Y_{[2]} }{2}- \frac{a \sigma_{yy}}{4}, & \textup{if}   \ \ T_1 < \frac{\rho T_2}{\xi} \ \textup{and}  \  \frac{a \sigma_{yy}}{2}    \leq T_2  <  \xi \rho T_1 -a \frac{\sigma_{yy}}{2} (1-\rho^2)  
\vspace{3mm} \\ 
\delta_{N,1}, &  \textup{otherwise}. 
\end{array}
\right.
\end{align*}		

	\noindent \textbf{Improved estimator 3:} 
For  $a>0$ $(a<0)$ and  $-1  \leq   \rho < 0$ $(0< \rho \leq1)$,  the estimator $\delta_{N,1}$  is improved by  
\begin{align*}
\delta_{N,1}^{I3}  \left(  \boldsymbol{Z}_1,\boldsymbol{Z}_2  \right)
&= \left\{   \begin{array}{ll} 
\frac{Y_{[1]}+Y_{[2]} }{2}- \frac{a \sigma_{yy}}{4}, & \textup{if}   \ \ T_1<\frac{\rho T_2}{\xi} \ \textup{and}  \   \frac{a \sigma_{yy}}{2}   \leq T_2  <    \xi \rho T_1 -a \frac{\sigma_{yy}}{2} (1-\rho^2) \vspace{3mm} \\
& \textup{or}  \ \ T_1>\frac{\rho T_2}{\xi} \ \textup{and}  \        \frac{a \sigma_{yy}}{2}  \geq  T_2  > \xi \rho T_1 -a \frac{\sigma_{yy}}{2}(1-\rho^2)    
\vspace{3mm} \\ 
\delta_{N,1}, &  \textup{otherwise}. 
\end{array}
\right.
\end{align*}
\noindent \textbf{Improved estimator 4:} 
For  $a<0$ and  $  \rho = 0$,  the estimator $\delta_{N,1}$  is improved by
\begin{align*}
\delta_{N,1}^{I4}  \left(  \boldsymbol{Z}_1,\boldsymbol{Z}_2  \right)
&= \left\{   \begin{array}{ll} 
\frac{Y_{[1]}+Y_{[2]} }{2}- \frac{a \sigma_{yy}}{4}, & \textup{if}   \ \  \frac{a \sigma_{yy}}{2}   \leq T_2  <  -\frac{a \sigma_{yy}}{2} 
\vspace{3mm} \\ 
\delta_{N,1}, &  \textup{otherwise}. 
\end{array}
\right.
\end{align*}
For $a>0$  and $\rho =0$, Theorem \ref{thm-s-c} fails to provide an improved estimator upon the estimator $\delta_{N,1}$.

\noindent \textbf{Improved estimator 5:} 
		For  $a>0$ and  $-1  \leq \rho < 0$,  the estimator $\delta_{N,2}$  is improved by
		\begin{align*}
		\delta_{N,2}^{I1}  \left(  \boldsymbol{Z}_1,\boldsymbol{Z}_2  \right)
		&= \left\{   \begin{array}{ll} 
		\frac{Y_{[1]}+Y_{[2]} }{2}- \frac{a \sigma_{yy}}{4}, & \textup{if}   \ \ T_1<\frac{\rho T_2}{\xi} \ \textup{and}  \   -\frac{a \sigma_{yy}}{2}   \leq T_2  <    \xi \rho T_1 -a \frac{\sigma_{yy}}{2} (1-\rho^2)
		\vspace{3mm} \\ 
		\delta_{N,2}, &  \textup{otherwise}. 
		\end{array}
		\right.
		\end{align*}	
\noindent	 \textbf{Improved estimator 6:} 
For  $a > 0 $ $(a <0)$ and  $0 <  \rho \leq 1$ $(-1 \leq \rho < 0)$,  the estimator $\delta_{N,2}$  is improved by  
\begin{align*}
\delta_{N,2}^{I2}  \left(  \boldsymbol{Z}_1,\boldsymbol{Z}_2  \right)
&= \left\{   \begin{array}{ll} 
\frac{Y_{[1]}+Y_{[2]} }{2}- \frac{a \sigma_{yy}}{4}, & \textup{if}   \ \ T_1<\frac{\rho T_2}{\xi} \ \textup{and}  \   -\frac{a \sigma_{yy}}{2}   \leq T_2  <    \xi \rho T_1 -\frac{a\sigma_{yy}}{2} (1-\rho^2) \vspace{3mm} \\
   & \textup{or}  \ \ T_1>\frac{\rho T_2}{\xi} \ \textup{and}  \  -\frac{a \sigma_{yy}}{2}  \geq T_2> \xi \rho T_1 -a \frac{\sigma_{yy}}{2}(1-\rho^2)         
\vspace{3mm} \\ 
\delta_{N,2}, &  \textup{otherwise}. 
\end{array}
\right.
\end{align*}
	For $a<0 \ (a\neq 0)$ and  $0 < \rho  \leq 1  \ (\rho =0)$, Theorem \ref{thm-s-c} fails to provide an improved estimator upon the  estimator $\delta_{N,2}$.
	
	\noindent \textbf{Improved estimator 7:} 
		For  $a>0$,  $0  <\rho \leq  1$, and $\varphi_3\leq \varphi_{I}$ or $\varphi_3\geq \varphi_{S}$, where $\varphi_3=\frac{1}{a} \ln \left[ 1 + \left( e^{ a T_2 }  -1\right)  \Phi \left( \frac{T_1}{\sqrt{2\sigma_{xx}}} \right)  \right]$, and $\varphi_{I}$ and $\varphi_{S}$ are as given in Lemma \ref{inf-sup},  the  estimator $\delta_{N,3}$  is improved by
		\begin{align*}
		\delta_{N,3}^{I1}  \left(  \boldsymbol{Z}_1,\boldsymbol{Z}_2  \right)
		&= \left\{   \begin{array}{ll} 
		\frac{Y_{[1]}+Y_{[2]} }{2}- \frac{a \sigma_{yy}}{4}, & \textup{if}  \ \  T_1   <  \frac{\rho T_2}{ \xi} \ \  \textup{and} \ \   T_2  <    \xi \rho T_1 -a \frac{\sigma_{yy}}{2} (1-\rho^2)
		\vspace{3mm} \\
		& \textup{or}  \ \  T_1   >  \frac{\rho T_2}{ \xi} \ \ \textup{and}  \ \ T_2> \xi \rho T_1 -a \frac{\sigma_{yy}}{2} (1-\rho^2)      
	\vspace{3mm} \\	 
		\delta_{N,3}, &  \textup{otherwise}. 
		\end{array}
		\right.
		\end{align*}	
	\noindent  \textbf{Improved estimator 8:} 
		For  $a<0$ and  $0  <\rho \leq  1$ and $\varphi_3\leq \varphi_{I}$,  the estimator $\delta_{N,3}$  is improved by
		\begin{align*}
		\delta_{N,3}^{I2}  \left(  \boldsymbol{Z}_1,\boldsymbol{Z}_2  \right)
		&= \left\{   \begin{array}{ll} 
			\frac{Y_{[1]}+Y_{[2]} }{2}- \frac{a \sigma_{yy}}{4}, & \textup{if}   \ T_1   <  \frac{\rho T_2}{ \xi} \ \  \textup{and} \ \   T_2  <    \xi \rho T_1 -a \frac{\sigma_{yy}}{2} (1-\rho^2)  
		\vspace{3mm} \\	 
		\delta_{N,3}, &  \textup{otherwise}. 
		\end{array}
		\right.
		\end{align*}	
\noindent  \textbf{Improved estimator 9:} 
		For  $a \neq 0$, $ -1 \leq  \rho < 0$ and $\varphi_3\leq \varphi_{I}$ or $\varphi_3\geq \varphi_{I}$,  the  estimator $\delta_{N,3}$  is improved by
		\begin{align*}
		\delta_{N,3}^{I3}  \left(  \boldsymbol{Z}_1,\boldsymbol{Z}_2  \right)
		&= \left\{   \begin{array}{ll} 
		\frac{Y_{[1]}+Y_{[2]} }{2}- \frac{a \sigma_{yy}}{4}, & \textup{if}  \ \   T_2  < \min   \left\{ \frac{\xi T_1}{\rho },  \xi \rho T_1 -a \frac{\sigma_{yy}}{2} (1-\rho^2) \right\}  
		\vspace{3mm} \\
		 & \textup{or}  \ \  \max \left\{ \frac{\xi T_1}{\rho },  \xi \rho T_1 -a \frac{\sigma_{yy}}{2} (1-\rho^2) \right\} < T_2    
		\vspace{3mm} \\	 
		\delta_{N,3}, &  \textup{otherwise}. 
		\end{array}
		\right.
		\end{align*}	
		\noindent \textbf{Improved estimator 10:} 
		For  $a \neq 0$,  $  \rho = 0$ and $\varphi_3\leq \varphi_{I}$,  the estimator $\delta_{N,3}$  is improved by
		\begin{align*}
		\delta_{N,3}^{I4}  \left(  \boldsymbol{Z}_1,\boldsymbol{Z}_2  \right)
		&= \left\{   \begin{array}{ll} 
	 	\frac{Y_{[1]}+Y_{[2]} }{2}- \frac{a \sigma_{yy}}{4} & \textup{if}  \ \  T_1< 0 \  \textup{and}  \   T_2  < - \frac{a\sigma_{yy}}{2} 
		\vspace{3mm} \\
		\delta_{N,3}, &  \textup{otherwise}. 
		\end{array}
		\right.
		\end{align*}	
\noindent \textbf{Improved estimator 11:}  	
				For $a>0$ and  $0 < \rho \leq 1$,  the estimator $\delta_{N,4}$  is improved by  
			\begin{align*}
			\delta_{N,4}^{I1}  \left(  \boldsymbol{Z}_1,\boldsymbol{Z}_2  \right)
			&= \left\{   \begin{array}{ll} 
			\frac{Y_{[1]}+Y_{[2]}}{2}- \frac{a \sigma_{yy}}{4}, & \textup{if} \   T_1  > \max \left\{- c\sqrt{2\sigma_{xx}}, \frac{\rho T_2}{\xi}\right\}  \ \text{and}  \  T_2 > \xi \rho T_1 - \frac{a\sigma_{yy}}{2}  (1-\rho^2)  
			\vspace{3mm} \\ 
				\frac{Y_{[1]}+Y_{[2]}}{2}- \frac{a \sigma_{yy}}{4}, & \textup{if} \  \frac{\rho T_2}{\xi} < T_1 \leq - c\sqrt{2\sigma_{xx}} \  \text{and}  \ \  \frac{a\sigma_{yy}}{2} \geq T_2 > \xi \rho T_1 -\frac{a\sigma_{yy}}{2}  (1-\rho^2)   
			\vspace{3mm} \\ 
			\delta_{N,4}, &  \textup{otherwise}. 
			\end{array}
			\right.
			\end{align*}
	\noindent \textbf{Improved estimator 12:}  	
For $a>0$ and  $-1\leq \rho < 0$,  the estimator $\delta_{N,4}$  is improved by  
\begin{gather}
\scalebox{1}{$  \begin{align*}
\delta_{N,4}^{I2}  \left(  \boldsymbol{Z}_1,\boldsymbol{Z}_2  \right)
&= \left\{   \begin{array}{ll} 
\frac{Y_{[1]}+Y_{[2]}}{2}- \frac{a \sigma_{yy}}{4}, & \textup{if} \ T_1 > \max \left\{- c\sqrt{2\sigma_{xx}}, \frac{\rho T_2}{\xi}\right\}   \ \text{and}  \  T_2 >  \xi \rho T_1 - \frac{a \sigma_{yy}}{2}  (1-\rho^2)   
\vspace{3mm} \\ 
\frac{Y_{[1]}+Y_{[2]}}{2}- \frac{a \sigma_{yy}}{4}, & \textup{if} \ T_1< \min \left\{- c\sqrt{2\sigma_{xx}}, \frac{\rho T_2}{\xi}\right\}  \  \text{and}  \ \frac{a\sigma_{yy}}{2} \leq T_2 < \xi \rho T_1 - \frac{a\sigma_{yy}}{2}  (1-\rho^2)   \vspace{3mm}  \\
& \textup{or} \  \frac{\rho T_2}{\xi} < T_1 \leq - c\sqrt{2\sigma_{xx}} \  \text{and}  \ \frac{a\sigma_{yy}}{2} \geq T_2 >  \xi \rho T_1 - \frac{a\sigma_{yy}}{2}  (1-\rho^2)   
\vspace{3mm} \\ 
\delta_{N,4}, &  \textup{otherwise}. 
\end{array}
\right.\end{align*}$}
\end{gather}			
\noindent\textbf{Improved estimator 13:}  	
For $a<0$ and  $0 < \rho \leq 1$,  the estimator $\delta_{N,4}$  is improved by  
\begin{gather}
\scalebox{1}{$ \begin{align*}
	\delta_{N,4}^{I3}  \left(  \boldsymbol{Z}_1,\boldsymbol{Z}_2  \right)
	&= \left\{   \begin{array}{ll} 
	\frac{Y_{[1]}+Y_{[2]}}{2}- \frac{a \sigma_{yy}}{4}, & \textup{if} \  - c\sqrt{2\sigma_{xx}} <T_1 <\frac{\rho T_2}{\xi}   \ \text{and}  \  T_2 <  \xi \rho T_1 - \frac{ a \sigma_{yy}}{2}  (1-\rho^2)  
	\vspace{3mm} \\ 
	\frac{Y_{[1]}+Y_{[2]}}{2}- \frac{a \sigma_{yy}}{4}, & \textup{if} \ T_1< \min \left\{- c\sqrt{2\sigma_{xx}}, \frac{\rho T_2}{\xi}\right\}  \  \text{and}  \ \frac{a\sigma_{yy}}{2} \leq T_2 < \xi \rho T_1 -\frac{a \sigma_{yy}}{2}  (1-\rho^2)   \vspace{3mm}  \\
	& \textup{or} \  \frac{\rho T_2}{\xi} < T_1 \leq - c\sqrt{2\sigma_{xx}} \  \text{and}  \  \frac{a\sigma_{yy}}{2} \geq T_2 > \xi \rho T_1 - \frac{a\sigma_{yy}}{2}  (1-\rho^2)  
	\vspace{3mm} \\ 
	\delta_{N,4}, &  \textup{otherwise}. 
	\end{array}
	\right.\end{align*}$}
\end{gather}			
\noindent \textbf{Improved estimator 14:}  			
For $a<0$ and  $ -1 \leq \rho < 0$,  the estimator $\delta_{N,4}$  is improved by  
\begin{gather}
\scalebox{.95}{$ \begin{align*}
	\delta_{N,4}^{I4}  \left(  \boldsymbol{Z}_1,\boldsymbol{Z}_2  \right)
	&= \left\{   \begin{array}{ll} 
	\frac{Y_{[1]}+Y_{[2]}}{2}- \frac{a \sigma_{yy}}{4}, & \textup{if} \  - c\sqrt{2\sigma_{xx}} <T_1 <\frac{\rho T_2}{\xi}   \ \text{and}  \  T_2 <  \xi \rho T_1 - \frac{ a \sigma_{yy}}{2}  (1-\rho^2)  
	\vspace{3mm} \\ 
	\frac{Y_{[1]}+Y_{[2]}}{2}- \frac{a \sigma_{yy}}{4}, & \textup{if} \ T_1< \min \left\{- c\sqrt{2\sigma_{xx}}, \frac{\rho T_2}{\xi}\right\}  \  \text{and}  \ \frac{a\sigma_{yy}}{2} \leq  T_2 < \xi \rho T_1 -a \frac{\sigma_{yy}}{2}  (1-\rho^2) 
	  \vspace{3mm}  \\
	\delta_{N,4}, &  \textup{otherwise}. 
	\end{array}
	\right.\end{align*}$}
\end{gather}			
\noindent  \textbf{Improved estimator 15:}  			
For $a<0$ and  $  \rho = 0$,  the estimator $\delta_{N,4}$  is improved by  
 \begin{align*}
	\delta_{N,4}^{I5}  \left(  \boldsymbol{Z}_1,\boldsymbol{Z}_2  \right)
	&= \left\{   \begin{array}{ll} 
	\frac{Y_{[1]}+Y_{[2]}}{2}- \frac{a \sigma_{yy}}{4}, & \textup{if} \   T_1> - c\sqrt{2\sigma_{xx}}   \ \text{and}  \  T_2 <   - \frac{a\sigma_{yy}}{2}  
	\vspace{3mm} \\ 
	\frac{Y_{[1]}+Y_{[2]}}{2}- \frac{a \sigma_{yy}}{4}, & \textup{if} \ T_1\leq- c\sqrt{2\sigma_{xx}} \  \text{and}  \ \frac{a\sigma_{yy}}{2} \leq  T_2 <  - \frac{a\sigma_{yy}}{2}  
	\vspace{3mm}  \\
	\delta_{N,4}, &  \textup{otherwise}. 
	\end{array}
	\right.\end{align*}	
	\\		
		For $a>0$ and  $\rho =0$, Theorem \ref{thm-s-c} fails to provide an improved estimator upon the estimator $\delta_{N,4}$.

\section{An application to Poultry feeds data }
In this section, a data analysis is presented using a real data set (reported in Olosunde (2013)) to domenstrate the computation of various estimates of $\theta_{\text{y}}^S$.   
Olosunde (2013) conducted a study to compare the effect of two different  copper-salt combinations on eggs produced by chicken in  poultry feeds. An equal number of chickens were randomly assigned to be fed with each of the two combinations.  A sample of 96 chickens were randomly selected from the poultry and were divided into two groups, of 48 chickens each.  One group was given an organic copper-salt combination and an inorganic copper-salt combination was given to the another group.  After a period of time, the weight  and the cholesterol level  of the eggs produced  by the two groups were measured.  The observed data from the organic  and the inorganic  Copper-Salt combinations are  reported in Olosunde (2013) and presented in Table \ref{Data}.  The eggs with more weights and less cholesterol is preferable.


Let $\pi_1$ and $\pi_2$ represent the populations given  an  organic copper-salt combination and an  inorganic  copper-salt combination, respectively.  
  Let $(X_i, Y_i)$ be a pair of  observations from the population $\pi_i,  \, i=1,2,$ where the $X$-variate  denotes the average  weights of  eggs and the $Y$-variate denotes the corresponding average cholesterol  levels. A number of 48 observations corresponding to each measurement is available from the data obtained by Olosunde (2013). Since the sample sizes of the two populations are same, the pooled variance-covaraince matrix is used.  The obtained data are assumed to  have a bivariate normal distribution with different means and common known  variance-covariance matrix.  To check the validity of the bivariate normality assumption for the available data set, we apply the Royston's normality test, given in the R-software package “MVN" that provided by Korkmaz et al. 2014. Royston's test combines the Shapiro-Wilk (S-W) test statistics for univaraite normality and  obtain one test statistic for bivariate/multivariate normality. The  Royston's  and Shapiro-Wilk tests statistic  with corresponding p-values are presented in Table \ref{royston-test}. 
\begin{table}[H] 	
	\begin{center}
		\caption{Normality  test, p-values,  kurtosis and skewness.} 
		\vspace{.1cm} 
	\label{royston-test}	\setlength{\tabcolsep}{6pt}
		\def\arraystretch{1.2}
		\begin{tabular}{|ccccccc|}
			\hline
			\cline{1-7} 
		Test& Measure & Statistic &p-value &kurtosis &   Skewness  &Normality \\
			\hline
	Royston 		& {$\pi_1$} &5.878109  &0.0529& && Yes
			\\
S-W		& {$\pi_1$}-weight  &0.9569  &  0.0758 &-1.256476&  0.01487668  & Yes  \\ 
S-W		&{$\pi_1$}-cholesterol  &0.9598  &  0.0988   &-1.213823& -0.09288089& Yes \\	
	Royston 	 &{$\pi_2$}  & 2.867&0.1051&  && Yes
\\
S-W	&	{$\pi_2$}-weight  &0.9679 &0.2089    &-1.110509 & 0.1015675& Yes\\
S-W		&{$\pi_2$}-cholesterol &0.9543 &0.0592 &-1.263555 &  -0.0816612 & Yes	\\	 
			\hline 			
		\end{tabular}    
	\end{center}
\end{table}
 From Table \ref{royston-test}, we may  conclude that the data set satisfy the  bivariate normality assumption at 0.05 level of significance.    The estimated parameters of the bivariate normal model (based on ML) are presented in Table \ref{table-data}. 
\begin{table}[H] 	
	\begin{center}
		\caption{Estimated parameters of the bivariate normal distribution.} 
		\vspace{.1cm} 
		\label{table-data}	\setlength{\tabcolsep}{7pt}
		\def\arraystretch{1.2}
		\begin{tabular}{|ccccc|}
			\hline
			\cline{1-5} 
			Population& Measure& Mean& Variance &Covariance\\
			\hline
			{$\pi_1$} &weight 	&59.0997& 8.1645& 40.0655 \\ 
			& cholesterol &131.4569&952.9425&\\	
			{$\pi_2$}	&weight	 &58.3516&8.1645& 40.0655\\
			& cholesterol &195.7275&952.9425&\\	
			\hline 			
		\end{tabular}    
	\end{center}
\end{table}
Recall that, the quality of a population is determined with regard to their X-variate, while the corresponding Y-variate is of main interest.  
We say that the population $\pi_1 \equiv N \left( \boldsymbol{\theta}^{(1)},   \boldsymbol{\Sigma}  \right)$ is  better than the population  $\pi_2 \equiv N \left(  \boldsymbol{\theta}^{(2)},  \boldsymbol{\Sigma}  \right)$ if 
$\theta_x^{(1)} > \theta_x^{(2)}$ and the population  $\pi_2$ is considered  better than the population $\pi_1$  if $\theta_x^{(1)} \leq \theta_x^{(2)}$, where 
$ \boldsymbol{\theta}^{(1)}= \left(\theta_x^{(1)}, \theta_y^{(1)} \right)^\intercal$ and  $ \boldsymbol{\theta}^{(2)}= \left(\theta_x^{(2)}, \theta_y^{(2)} \right)^\intercal$ are the mean vectors of the populations $\pi_1$ and $\pi_2$   respectively. From the data  we have $ \boldsymbol{\hat{\theta}}^{(1)} = \left( 59.0998, 131.4569 \right)^\intercal$, $ \boldsymbol{\hat{\theta}}^{(2)} = \left(  58.3517,  195.7275 \right)^\intercal$, and  $ \boldsymbol{\Sigma} = \begin{bmatrix}
8.1645 &  40.0655  \\
 40.0655 & 952.9425
\end{bmatrix}$.  
 It can be  observed that the average weight of eggs from chicken fed with an organic copper-salt combination is larger than the one with an in-organic copper-salt combination. 
Therefore, using the natural selection rule $\boldsymbol{\psi}$ given in  (\ref{sel-rule}), we may conclude that the population $\pi_1$ is preferable over   the population $\pi_2$. Also, the average cholesterol level for  the population $\pi_1$  is less than that for the   population $\pi_2$. Hence, based on the above observations,  the organic copper-salt combination is recommended. This result was  also  obtained by Olosunde (2013).  The various estimates of $\theta_{\textnormal{y}}^S$ of the selected bivariate normal population are presented in Tables \ref{table-est1} and \ref{table-est2}.
\begin{table}[H]
	\begin{center}
		\caption{The various estimates of $\theta_{\textnormal{y}}^S$ for $a=1$.} 
		\vspace{.1cm} 
		\label{table-est1} 	\setlength{\tabcolsep}{6pt}
		\def\arraystretch{1.4}
		\begin{tabular}{|c|c|c|c|c|c|c|c|}
			\hline
		$\delta_{N,1}$& $\delta_{N,1}^{I1}$ & 	$\delta_{N,2}$& 	$\delta_{N,2}^{I2}$& 	$\delta_{N,3}$ & 	$\delta_{N,3}^{I1}$ & 	$\delta_{N,4}$ & $\delta_{N,4}^{I1}$\\
			\cline{1-8} 
		131.4569	&131.4569&-345.0144 &-345.0144&194.9654&194.9654&163.5922& 163.5922 \\
			\hline 			
		\end{tabular}
	\end{center}
\end{table}
\begin{table}[H]
	\begin{center}
		\caption{The various estimates of $\theta_{\textnormal{y}}^S$ for $a=-1$.} 
		\vspace{.1cm} 
		\label{table-est2} 	\setlength{\tabcolsep}{8pt}
		\def\arraystretch{1.4}
		\begin{tabular}{|c|c|c|c|c|c|c|}
			\hline
			$\delta_{N,1}$& $\delta_{N,1}^{I3}$ & 	$\delta_{N,2}$& 	 	$\delta_{N,3}$ & 	$\delta_{N,3}^{I2}$ & 	$\delta_{N,4}$ & $\delta_{N,4}^{I3}$\\
			\cline{1-7} 
			131.4569	&401.8278 &607.9281&132.0856&401.8278&163.5922& 163.5922\\
			\hline 			
		\end{tabular}
	\end{center}
\end{table}

\section{Risk Comparisons of Estimators}
In this section, we compare the risk performance of the proposed estimators of $\theta_{\textnormal{y}}^S$, using the  loss function given in (\ref{loss1.1}).  For this purpose, a simulation study is performed using MATLAB software to compute the  values of risk of the various estimators.  20,000 simulation runs  with  different configurations of parameters are used to obtain the risk values.
 (For this purpose, a simulation study is performed using MATLAB software with 20,000 simulation runs  and   different configurations of parameters are used.
  Note that the estimator with the least average  risk values is preferable.  Further, the natural selection rule  $\boldsymbol{\psi}$ presented in Equation (\ref{sel-rule}) is used for achieving the aim of selecting the best bivariate normal population.  It is easy to see that, the risk of the proposed  estimators of $\theta_{\textnormal{y}}^S$  depend on the parameters  $\sigma_{xx}$, $\sigma_{yy}$, $\rho$, $a$ and $\theta^{(1)}=\left(  \theta_{x}^{(1)}, \theta_{y}^{(1)}\right)$, $\theta^{(2)}=\left(  \theta_{x}^{(2)}, \theta_{y}^{(2)}\right)$ (only through $\theta_x$ and $\theta_y$). So that, the risk functions are vary  for different combinations of these parameters. The computed values of risks of the various estimators of $\theta_{\textnormal{y}}^S$  are presented in Tables \ref{table:risk1}-\ref{table:risk6}, for different combinations of  $\theta^{(1)}$,  $\theta^{(2)}$, 
 and for $\sigma_{xx}= \sigma_{yy}= 2$,  $\rho  \in \left\{ -1,0,1 \right\}$, and  $a \in \left\{ -1,1 \right\}.$   Note that the computation of risk values was carried-out for other values of $a$ and $\rho$ but these values were omitted from the tables because  the same results were obtained.  The risk values of the hybrid estimator $\delta_{N,4}$ were calculated for $c=1$. In view of  the risk values in   Tables \ref{table:risk1}-\ref{table:risk6},  we present the following assessment of  the estimators of $\theta_{\textnormal{y}}^S$. 
\begin{itemize}
 	\item[(1)] For $a>0$ and $0<\rho \leq 1$, the imrpoved estimators  $\delta_{N,1}^{I1}$ and  $\delta_{N,2}^{I2}$ provide a considerable improvement upon the estimators $\delta_{N,1}$ and  $\delta_{N,2}$, respectively. The improved  estimators  $\delta_{N,3}^{I1}$ and  $\delta_{N,4}^{I1}$ have the same performance with the  estimators $\delta_{N,3}$ and  $\delta_{N,4}$, respectively, hence their risk values were omitted form Table \ref{table:risk1}. The improved estimator  $\delta_{N,2}^{I2}$ dominate all other estimators and 
 	 has the  least values of risk among other estimators. 

\item[(2)] 	For $a>0$ and $-1 \leq \rho < 0$, the improved estimators $\delta_{N,1}^{I3}$, $\delta_{N,2}^{I1}$, $\delta_{N,3}^{I3}$ and $\delta_{N,4}^{I2}$ perform better than their respective natural estimators. However, among all these estimators the improved estimator $\delta_{N,1}^{I3}$ has the best performance. 

\item[(3)] For $a>0$ and $\rho=0$, the improved estimator $\delta_{N,3}^{I4}$ provides a significant improvement upon the  estimator $\delta_{N,3}$. Also,  the  estimator $\delta_{N,3}^{I4}$ has better performance  than the  estimators $\delta_{N,2}$ and $\delta_{N,4}$ only   when $\theta_x \geq -0.2$ and $\theta_y \leq 0.2$. But,  when $\theta_x < -0.2$ and $\theta_y > 0.2$  the estimator $\delta_{N,2}$  performs better than $\delta_{N,3}^{I4}$. Further, the estimator $\delta_{N,2}$  dominates the three estimators $\delta_{N,1}$, $\delta_{N,3}$ and  $\delta_{N,4}$.  
\item[(4)] For $a<0$ and $0 < \rho \leq 1$, the estimator $\delta_{N,4}$ dominates the estimators $\delta_{N,2}$ and $\delta_{N,3}$, but, when $\theta_x$ and $\theta_y$ are very close to zero, $\delta_{N,3}$ dominates   $\delta_{N,4}$. The estimator $\delta_{N,1}$ dominates all the estimators of $\theta_{\textnormal{y}}^S$.  The improved  estimators  $\delta_{N,1}^{I3}$, $\delta_{N,3}^{I2}$ and  $\delta_{N,4}^{I3}$ have the same values of risk with the  estimators $\delta_{N,1}$ $\delta_{N,3}$ and  $\delta_{N,4}$, respectively, hence their risk values were omitted form Table \ref{table:risk4}.

\item[(5)] For $a<0$ and $-1 \leq \rho < 0$, the improved estimators  $\delta_{N,1}^{I2}$,  $\delta_{N,2}^{I2}$, $\delta_{N,3}^{I3}$  and $\delta_{N,4}^{I4}$ provide considerable improvement upon their respective natural estimators. However,  the improved estimator  $\delta_{N,2}^{I2}$ has the least risk values among all  these estimators. 

\item[(6)] For $a<0$ and $ \rho = 0$, the improved estimators $\delta_{N,1}^{I4}$, $\delta_{N,3}^{I4}$ and $\delta_{N,4}^{I5}$ provide only marginal improvement upon the  estimators $\delta_{N,1}$, $\delta_{N,3}$ and $\delta_{N,4}$, respectively.  The estimator  $\delta_{N,4}^{I5}$ domintes the other estimators when $\theta_x$ and $\theta_y$ are very close to zero, but when  $\theta_x$ and $\theta_y$ are not close to zero the  estimator $\delta_{N,2}$ dominates $\delta_{N,4}^{I5}$. 
 \end{itemize}
Based on the above observations, we conclude that, for $a>0$ and $ 0\leq \rho \leq 1$ the performance of the  estimator  $\delta_{N,2}^{I2}$ is satisfactory, hence is recommended for practical purposes.  For  $a>0$ and $ -1\leq \rho <0$, the estimator  $\delta_{N,1}^{I3}$ is recommended. For $a>0$ and $\rho=0$, the estimator $\delta_{N,3}^{I4}$ is recommended when $\theta_x \geq -0.2$ and $\theta_y \leq 0.2$ and  the estimator $\delta_{N,2}$ is recommended for other values of $\theta_{x}$ and  $\theta_{y}$. 
For $a<0$, the use of the natural estimator $\delta_{N,1}$ is recommended for $0< \rho \leq 1$ and the estimator $\delta_{N,2}^{I2}$ is recommended for $-1\leq \rho <0$. Also, for $a<0$ and $\rho =0$, the estimator $\delta_{N,4}^{I5}$ is recommended when $\theta_x$ and $\theta_y$ are very close to zero, and the estimator  $\delta_{N,2}$ is recommended when   $\theta_x$ and $\theta_y$ are not close to zero. 
  \section*{Acknowledgement}
The authors are thankful to Dr. A. A. Olosunde for providing a complete data set that used as an application in this paper.     

\pagebreak

\begin{table}[H]
	\begin{center}
		\caption{Organic and Inorganic copper-salt combinations observed data.} 
		\vspace{.1cm} 
		\label{Data} 	\setlength{\tabcolsep}{12pt}
		\def\arraystretch{1.3}
		\begin{tabular}{|cccc|cccc|}
			\hline
			\multicolumn{4}{|c|}{Organic Copper-Salt} & 	\multicolumn{4}{|c|}{Inorganic Copper-Salt}\\
			\hline
			\multicolumn{2}{|c}{Weight} & 	\multicolumn{2}{c|}{Cholestrol} &\multicolumn{2}{|c}{Weight} &\multicolumn{2}{c|}{Cholestrol}    \\
			\cline{1-8} 
			56.08 &  56.34	 & 60.73 & 66.03 	& 52.67 	& 53.17  & 164.23&  167.42 \\
			56.61 &  56.87& 71.33 &  76.63 &53.67 &54.17 &    170.60 &     173.78\\ 
			57.13&  57.39&    81.86 & 81.93 & 54.67 & 55.17 &    176.96 &    180.14 \\ 
			57.65 & 57.92 & 81.93 & 87.16 & 55.67 &     56.17 &      183.32 &  186.51 \\ 
			58.18 & 58.44 &  92.46 & 92.52 & 56.67   &  57.17  &  189.69 &     192.87 \\ 
			58.70 & 58.96  &  97.76 &  97.82 & 57.67 &  58.17 &   196.05 & 199.24 \\
			59.23 & 59.45  &  103.06 & 103.11 &  58.67  &  59.17 & 202.42 & 205.60 \\ 
			59.75  & 60.01  & 108.36 & 108.41  &  59.67  & 60.17 & 208.78 &  211.96 \\
			60.27 & 60.54 & 113.66  &  113.70 &  60.67 &  61.17 &  215.14 &  218.33 \\ 
			60.80 & 61.06  & 118.96 & 119.00 &  61.67 &  62.17  &  221.52 &  224.69 \\ 
			61.32 & 61.58 & 124.26 &  124.30 &  62.67 &  63.17 &  224.85&   227.88 \\ 
			61.85 & 62.34  &  129.56  & 129.60  & 63.43 & 65.15  & 228.03 &    231.06\\
			62.11 & 61.85 & 134.86  &  134.89 & 65.67 & 63.43  &    228.01 &  224.83 \\ 
			61.58 & 61.32  & 140.16 & 140.19 &    62.93 &  62.43 &  221.65 &  218.46 \\ 
			61.06  &  60.80  &  1745.46  &   145.48 &    61.93  &   61.43  &  215.28 & 212.10 \\  
			60.54 &  60.27  & 150.76  &150.78 &  60.93  &60.43&   208.92 &   205.74 \\
			60.01  &59.75   &156.06&   156.08 &59.93  & 59.43   & 202.56    & 199.37 \\
			59.49  & 59.23  &  161.36 &   161.37  & 58.93  &    58.43 & 196.19  &   193.01 \\
			59.00 &  58.70  & 166.66  &   166.67  & 57.93   & 57.43  &  189.83  &   186.65 \\
			58.44  &58.18    &171.96   & 171.97 & 56.93   &56.43  & 183.46   &180.28 \\
			57.92  & 57.65    & 177.26 &  177.26  &55.93   &55.43     &177.10  & 173.72 \\
			57.39  & 57.13 & 182.56  &  182.56 &54.93 & 54.43  &   170.74 &  167.55 \\ 
			56.87 & 56.61  & 182.56  & 187.86 & 53.93 &  53.43   & 164.37  &   161.19  \\ 
			56.34  & 56.08  & 187.86  &  193.16 & 52.93  &   52.43 & 158.01 &  154.83	 \\
			\hline 			
		\end{tabular}
	\end{center}
\end{table}
\begin{table}[H]
	\begin{center}
		\caption{Risk values of the various estimators of $\theta_{\textnormal{y}}^{S}$ for  $a=1, \, \sigma_{xx}=\sigma_{yy}=2, \, \rho = 1$}  \vspace{.5cm}
		\label{table:risk1} \setlength{\tabcolsep}{10pt}
		\def\arraystretch{1.5}
		\begin{tabular}{|cc|cccccc|}
			\hline
			\cline{1-8}$\theta^{(1)}$& $\theta^{(2)}$& 
			$\delta_{N,1}$& $\delta_{N,1}^{I1}$ & $\delta_{N,2}$ &$ \delta_{N,2}^{I2}$ &$ \delta_{N,3}$  &$ \delta_{N,4}$  \\
			\hline
			(0.2,2)	&		(2,0.2)	&
			2.6462	&	1.7312	&   0.9743	&	0.8315&	 3.7656&  	       2.5723
			\\
			(0.4,1.8)	&		(1.8,0.4)&	 2.6458 	&  1.6916	&	      1.0069
			&  0.8184	&	     3.4462 	&        	    2.7156
			\\
			(0.6,1.6)	&		(1.6,0.6)	&	  2.6915& 1.6833	&	     1.0001
			&	      0.7655	&	        2.9903 &     	     2.3291
			\\
			(0.8,1.4)	&		(1.4,0.8)	&	  2.9213	&	 1.7103	&	      1.0110	&	    0.7371	&    2.4982	&       2.2103  	\\
			(1,1.2)	&		(1.2,1)	&	 2.7271&	
			1.5697	&	      1.0090
			&    0.7050 	&	  2.5228	&      2.2934  	\\
			(0,0)	&		(0,0) &  2.6445 &	 2.6445&       1.0463						
			&        1.0463			
			&     2.4251						
			&         2.3460 	\\
			(1.2,1)	&		(1,1.2) &  2.8050&    1.5987	&    0.9524	&    0.7978& 2.6302	&    2.3102   \\			
			(1.4,0.8)    &		(0.8,1.4)	&	 3.0249 &1.7512&	     0.9608			&	      0.8304		&     2.5218&         2.3739		\\
			(1.6,0.6)	&		(0.6,1.6)	&	  2.7631 	&	 1.6928	&	0.9847		&      0.8421			&	     2.5866				 &             2.4471		 \\
			(1.8,0.4)	&(0.4,1.8)&2.5929	& 1.6722 & 0.9783&   0.8634	&   2.8642 &         2.5125	\\
			(2,0.2)	&		(0.2,2)	& 2.5158	& 1.7084 	&	    0.9673	&	    0.8592	&	  3.4061	&         2.6093	 	\\
			\hline
		\end{tabular}
	\end{center}
\end{table}

\begin{table}[H]
	\begin{center}
		\caption{Risk values of the various estimators of $\theta_{\textnormal{y}}^{S}$ for  $a=1, \, \sigma_{xx}=\sigma_{yy}=2, \, \rho = -1$}  \vspace{.5cm}
		\label{table:risk2} \setlength{\tabcolsep}{7pt}
		\def\arraystretch{1.5}
		\begin{tabular}{|cc|cccccccc|}
			\hline
			\cline{1-10}$\theta^{(1)}$& $\theta^{(2)}$& 
			$\delta_{N,1}$& $\delta_{N,1}^{I3}$ & $\delta_{N,2}$ &$ \delta_{N,2}^{I1}$ &$ \delta_{N,3}$ &$ \delta_{N,3}^{I3}$ &$ \delta_{N,4}$ & $ \delta_{N,4}^{I2}$ \\
			\hline
			(0.2,2)	&		(2,0.2)	&
			0.8444&	  0.8311	&  1.0076&  0.9003&   1.1567& 
			1.0309
			&	    1.1740 &  1.0267
			\\
			(0.4,1.8)	&		(1.8,0.4)&	  0.8144&  0.7603&1.0551&
			0.8648	&  1.0014& 0.8922&	    1.0819
			&  0.9385
			\\
			(0.6,1.6)	&	(1.6,0.6)	&0.7200&  0.6401	& 1.0794&0.7991&  0.7685& 0.7001 &	0.8720	&  0.7522
			\\
			(0.8,1.4)	&		(1.4,0.8)	&    0.6935	&  0.6061	&  1.1007
			&  0.8165&0.6789&  0.6317&  0.8052&   0.7123
			\\
			(1,1.2)	&		(1.2,1)	&	0.6668&	  0.5711	&1.1303&  0.7972	&0.6010	& 0.5749& 0.7359 &  0.6551			 
			\\
			(0,0)	&		(0,0) &  0.6636&  0.5244&1.1273&  0.6705		&0.5974 & 0.5728	&  0.7312
			&   0.6167
			\\
			(1.2,1)	&		(1,1.2) &   0.6759 &	0.5883&	 1.1219
			&        0.8072			
			&	0.6331	& 0.6077 &   0.7545
			&    0.6796
			\\	(1.4,0.8)    &		(0.8,1.4)	&  0.6741&	    0.5921
			&  1.0815 & 0.8039 &0.6631 &  0.6163
			& 0.7860  &  0.6968
			\\
			(1.6,0.6)	&		(0.6,1.6)	& 0.7262 &0.6411 &   1.0883 &0.7959
			&    0.7793 &   0.7183&      0.8866
			& 0.7703
			\\
			(1.8,0.4)	&		(0.4,1.8)	& 0.8091 &0.7601 &	 1.0564 &   0.8658	&0.9857 & 0.8826
			&1.0557
			& 0.9217			 
			\\
			(2,0.2)	&		(0.2,2)	&  0.8739& 0.8523& 
			1.0286 &  0.9121 &1.1834&  1.0477&1.2171
			& 1.0514
			\\
			\hline
		\end{tabular}
	\end{center}
\end{table}

\begin{table}[H]
	\begin{center}
		\caption{Risk values of the various estimators of $\theta_{\textnormal{y}}^{S}$ for  $a=1, \, \sigma_{xx}=\sigma_{yy}=2, \, \rho = 0$}  \vspace{.5cm}
		\label{table:risk3} \setlength{\tabcolsep}{10pt}
		\def\arraystretch{1.5}
		\begin{tabular}{|cc|ccccc|}
			\hline
			\cline{1-7}$\theta^{(1)}$& $\theta^{(2)}$& 
			$\delta_{N,1}$& $\delta_{N,2}$ & $\delta_{N,3}$ &$ \delta_{N,3}^{I4}$ &$ \delta_{N,4}$  \\
			\hline
			(0.2,2)	&		(2,0.2)	&
			1.8102
			&      1.0294
			& 2.8037 &    0.2382  &       1.7059		     
			\\
			(0.4,1.8)	&	(1.8,0.4)&	    1.7510	&  1.0135&   2.2707	& 0.2913&    1.4270
			\\
			(0.6,1.6)	&		(1.6,0.6)	&    1.7064&0.9899&	   1.7890
			&	0.3291	&	   1.2453\\
			(0.8,1.4)	&		(1.4,0.8)	&   1.7238	&	 1.0044
			& 1.5421&   0.5055& 1.0612	\\
			(1,1.2)	&(1.2,1)	&	 1.6948&  0.9882&	 1.4484 & 0.6394
			&  1.0201
			\\
			(0,0)	&		(0,0) &  1.7815	&0.9668& 1.4718		
			& 0.7055&  
			1.0048\\
			(1.2,1)	&		(1,1.2) &     1.7621
			&   0.9876&  1.4629& 0.8513&	1.0175	\\			
			(1.4,0.8)    &		(0.8,1.4)	& 1.7868
			&1.0149			
			&   1.6371	&1.1382&   1.1297 \\
			(1.6,0.6)	&		(0.6,1.6)	&      1.8065
			&	 1.0233	&	1.8261	& 1.4093&	 1.2745 \\
			(1.8,0.4)	&		(0.4,1.8)	&   1.7162
			&1.0005&2.1955	& 1.8950&   1.3780	\\
			(2,0.2)	&		(0.2,2)	&  1.7324	& 
			1.00108	&    2.7087&   2.5046&	1.6951 \\
			\hline
		\end{tabular}
	\end{center}
\end{table}

\begin{table}[H]
	\begin{center}
		\caption{Risk values of the various estimators of $\theta_{\textnormal{y}}^{S}$ for  $a=-1, \, \sigma_{xx}=\sigma_{yy}=4, \, \rho = 1$}  \vspace{.5cm}
		\label{table:risk4} \setlength{\tabcolsep}{10pt}
		\def\arraystretch{1.5}
		\begin{tabular}{|cc|cccc|}
			\hline
			\cline{1-6}$\theta^{(1)}$& $\theta^{(2)}$& 
			$\delta_{N,1}$ & $\delta_{N,2}$ &$ \delta_{N,3}$  &$ \delta_{N,4}$  \\
			\hline
			(0.2,2)	&		(2,0.2)	&
			0.8470&1.0127	&  5.7682&0.8711\\
			(0.4,1.8)	&		(1.8,0.4)&  0.7248&1.0865 & 2.5465	&   0.8804\\
			(0.6,1.6)	&		(1.6,0.6)	&0.7089
			& 1.0710&1.3354&  0.8717\\
			(0.8,1.4)	&		(1.4,0.8)	&   0.7276
			&1.0884&  0.8350&      0.7409 \\
			(1,1.2)	&		(1.2,1)	&  0.6622
			&1.0792& 0.6940&	   0.7321\\
			(0,0)	&		(0,0) &     0.7230
			&     1.0702	&  0.8046&  0.8953 \\
			(1.2,1)	&		(1,1.2) &      0.6552
			& 1.0754&   0.6857
			&    
			0.7314  \\			
			(1.4,0.8)    &		(0.8,1.4)	&    0.7326	&	     1.0777	&	0.8444	&    0.7707\\
			(1.6,0.6)	&		(0.6,1.6)	&0.7397& 1.0828&   1.2542&    0.7920
			\\
			(1.8,0.4)	&		(0.4,1.8)	&0.7428	& 1.0912& 2.5213&   0.9157\\
			(2,0.2)	&		(0.2,2)	&   0.7429&  1.0854	&   5.7609& 0.9237\\
			\hline
		\end{tabular}
	\end{center}
\end{table}

\begin{table}[H]
	\begin{center}
		\caption{Risk values of the various estimators of $\theta_{\textnormal{y}}^{S}$ for  $a=-1, \, \sigma_{xx}=\sigma_{yy}=4, \, \rho = -1$}  \vspace{.5cm}
		\label{table:risk5} \setlength{\tabcolsep}{6pt}
		\def\arraystretch{1.5}
		\begin{tabular}{|cc|cccccccc|}
			\hline
			\cline{1-10}$\theta^{(1)}$& $\theta^{(2)}$& 
			$\delta_{N,1}$& $\delta_{N,1}^{I2}$ & $\delta_{N,2}$ &$ \delta_{N,2}^{I2}$ &$ \delta_{N,3}$ &$ \delta_{N,3}^{I3}$ &$ \delta_{N,4}$ & $ \delta_{N,4}^{I4}$ \\
			\hline
			(0.2,2)	&		(2,0.2)	&
			2.5510	&	  1.5050&	0.9719& 0.8279&5.4113&  2.6828  &	      2.2766&   1.4729 \\
			(0.4,1.8)	&		(1.8,0.4)& 2.7091& 1.5983& 0.9564
			& 0.7988& 5.8670&  2.7114&	 2.3859&  1.4868\\
			(0.6,1.6)	&		(1.6,0.6)	&	 2.6500
			&    1.3048	&	 0.8987	&  0.6791&5.9427& 2.1990& 2.3006
			& 1.1539\\
			(0.8,1.4)	&		(1.4,0.8)	&	 2.9947&     1.5611& 0.9747	&  0.6968	& 6.7696&  3.0248&  2.5833&  1.4237\\
			(1,1.2)	&		(1.2,1)	&	 2.6574 &	1.3893 &	0.8401& 0.6219	&6.1065	&  2.6089 &   2.2276 &1.2886
			\\
			(0,0)	&		(0,0) &   2.5950&	 0.9487&0.8343
			&  0.5545
			&    6.0156&  1.3589	& 2.1910& 0.8812 \\
			(1.2,1)	&		(1,1.2) &  2.7163&	  1.3609 &	0.8578	&  0.6191	&	 6.2608	&    2.5868 &    2.2753  &    1.2710  \\			
			(1.4,0.8)    &		(0.8,1.4)	&  2.7172
			&1.4031
			& 0.8875&0.6452&  6.2037 &   2.7325&2.3345	& 1.2680	\\
			(1.6,0.6)	&		(0.6,1.6)	&  2.6410
			&	1.2674	& 0.8919	&     0.6566	&5.8921 &  2.1696	&  2.2734&1.1366 \\
			(1.8,0.4)	&		(0.4,1.8)	& 2.6376 & 1.5342 &	  0.9428&  0.7901	&   5.7970&   2.6678 & 2.3202 &     1.4445 	\\
			(2,0.2)	&		(0.2,2)	&2.5747	&   1.4634 &  0.9800
			&0.8183&  5.5067	&	 2.5940 & 2.2982	&   1.4377	\\
			\hline
		\end{tabular}
	\end{center}
\end{table}
\begin{table}[H]
	\begin{center}
		\caption{Risk values of the various estimators of $\theta_{\textnormal{y}}^{S}$ for  $a=-1, \, \sigma_{xx}=\sigma_{yy}=4, \, \rho = 0$}  \vspace{.5cm}
		\label{table:risk6} \setlength{\tabcolsep}{7pt}
		\def\arraystretch{1.5}
		\begin{tabular}{|cc|cccccc|}
			\hline
			\cline{1-8}$\theta^{(1)}$& $\theta^{(2)}$& 
			$\delta_{N,1}$& $\delta_{N,1}^{I4}$ & $\delta_{N,2}$ &$ \delta_{N,3}$  &$ \delta_{N,4}$ &$ \delta_{N,4}^{I5}$ \\
			\hline
			(0.2,2)	&		(2,0.2)	&
			1.6912	&   1.6477	&  0.9909 & 80.5726&  1.3579&	 1.2440 \\
			(0.4,1.8)	&		(1.8,0.4)&	   1.7057 &   
			1.6595 &0.9975& 63.5385	&    1.2313	& 1.1164 \\
			(0.6,1.6)	&		(1.6,0.6)	&	  1.7053	&	    1.6616
			&	0.9974 &	      92.9430
			&    1.1331 &    1.0304  \\
			(0.8,1.4)	&		(1.4,0.8)	&     1.7269
			&  1.6796	& 1.0031 &  39.0709		&   1.0501
			&     0.9585\\
			(1,1.2)	&		(1.2,1)	&	    1.6670 
			&     1.6183	&	 0.9756	&    24.7978		&     1.0183&    0.9209	\\
			(0,0)	&		(0,0) &      1.6932
			&	     1.6436	&     0.9835 &   24.7124 &     0.9684	&    0.8822	\\
			(1.2,1)	&		(1,1.2) &     1.7682 
			&  1.7364 &	1.0191 	&        47.2484 	&     0.9901 &   0.8853  \\			
			(1.4,0.8)    &		(0.8,1.4)	&   1.7416			 
			& 1.7215	&   0.9995	& 34.1616 	&    1.0370&  0.9302 \\
			(1.6,0.6)	&		(0.6,1.6)	&  1.6809
			&1.6567	&  1.0217 &    53.2835
			& 1.1860
			& 1.0729 	\\
			(1.8,0.4)	&		(0.4,1.8)	&	    1.6833
			&	1.6786 & 0.9869 	&   136.5534	  &  1.2239 & 1.0576	\\
			(2,0.2)	&		(0.2,2)	&   1.6913	&   1.6913 &   1.0117	&   76.2712&   1.4199	&   1.2616\\
			\hline
		\end{tabular}
	\end{center}
\end{table}
\end{document}